\documentclass[11pt,reqno]{amsart}
\usepackage{amssymb,mathrsfs,graphicx,subfigure, enumerate}
\usepackage{amsmath,amsfonts,amssymb,amscd,amsthm,bbm}
\usepackage{extpfeil}
\usepackage{graphicx,colortbl}
\usepackage{float}
\usepackage{epsfig}
\usepackage{caption}
\usepackage{bm}
\graphicspath{{Figure/}}
\usepackage{kotex}
\usepackage[margin=1in]{geometry}

\makeatletter
\@namedef{subjclassname@2020}{\textup{2020} Mathematics Subject Classification}
\makeatother

\title[Navier-Stokes-Korteweg equations]{Time-Asymptotic Stability of the Stationary Solution to the Impermeable Wall Problem for the radially Symmetric Navier–Stokes–Korteweg Equations}

\author[Kim]{Jeongho Kim}
\address[Jeongho Kim]{\newline Department of Applied Mathematics, \newline Kyung Hee University, 1732 Deogyeong-daero, Giheung-gu, Yongin-si, Gyeonggi-do 17104, Republic of Korea}
\email{jeonghokim@khu.ac.kr}

\begin{document}
\newtheorem{theorem}{Theorem}[section]
\newtheorem{lemma}{Lemma}[section]
\newtheorem{corollary}{Corollary}[section]
\newtheorem{proposition}{Proposition}[section]
\newtheorem{remark}{Remark}[section]
\newtheorem{definition}{Definition}[section]
\renewcommand{\theequation}{\thesection.\arabic{equation}}
\renewcommand{\thetheorem}{\thesection.\arabic{theorem}}
\renewcommand{\thelemma}{\thesection.\arabic{lemma}}
\newcommand{\bbr}{\mathbb R}
\newcommand{\R}{\mathbb{R}}
\newcommand{\e}{\varepsilon}
\newcommand{\pa}{\partial}
\newcommand{\<}{\left\langle}
\renewcommand{\>}{\right\rangle}
\subjclass[2020]{35Q35, 76N10, 35B40} 

\keywords{asymptotic behavior; exterior domain; impermeable wall problem; Navier-Stokes-Korteweg equations; radially symmetric equations}

\begin{abstract}
	We study the asymptotic behavior of the initial-boundary value problem for the radially symmetric Navier--Stokes--Korteweg (NSK) equations defined on the exterior domain $\Omega = \{x\in\R^n~|~|x|> 1\}$. In particular, we consider the impermeable wall problem, where the velocity at the boundary $\{x\in\R^n~|~|x|=1\}$ is set to be zero. We show that, if the initial data is a small perturbation of the stationary solution, and the boundary data are sufficiently small, then there exists a global-in-time strong solution to the radially symmetric NSK equations, and it converges to the stationary solution time-asymptotically. Our method is based on elementary energy estimates with a combination of carefully designed energy functionals.
\end{abstract}

\maketitle

\section{Introduction}\label{sec:1}
\setcounter{equation}{0}

The motion of compressible fluid with capillary effect is described by the so-called Navier--Stokes--Korteweg (NSK) system, which has been used to understand two-phase fluids \cite{AMH98} and quantum viscous fluids \cite{AS22,BGL19,J10}. To present the exact governing equations, let $\Omega\subset \R^n$ be the domain occupied by the fluid, and $\rho(t,x)$ and $U(t,x)$ be density and velocity of the fluid for $t\ge0$ and $x\in\Omega$. Then, the NSK system reads as

\begin{align}
	\begin{aligned}\label{eq:NSK}
	&\rho_t +\mbox{div}(\rho U)=0,\quad t>0,\quad x\in \Omega,\\
	&\rho (U_t +U\cdot\nabla U)+\nabla P(\rho)=\nu\Delta U+(\nu+\lambda)\nabla(\mbox{div}U)+\kappa\rho\nabla\Delta \rho.
	\end{aligned}
\end{align}

Here, $P(\rho)=\rho^\gamma$ is a barotropic pressure, $\nu$ and $\lambda$ are viscosity coefficients, and $\kappa$ is a capillarity coefficient. The idea of introducing capillarity effect on the fluid dates back to van der Waals \cite{W94} and Korteweg \cite{K01}, where a prototype of the capillary fluid model was considered. Later, the NSK system \eqref{eq:NSK} was rigorously justified in \cite{DS85} by using the thermodynamics of interstitial working. After the NSK model was introduced, numerous results such as well-posedness \cite{CH13,DD01,H11,HL96} and stability \cite{C12,CHZ15,FLP26,FPZ23,HKKL25} were studied in the last decades. For the readers who are interested in the recent results on the NSK system, we refer to \cite{AS22,BGL19,GL16,HK26} and references therein for a detailed, though not exhaustive, list of works on it.

In this paper, we focus on the NSK equations under the radially symmetric assumption. To be specific, we consider the domain $\Omega$ as an exterior domain $\Omega:=\{x\in \R^n~|~ |x|>1\}$, and consider the density and velocity that are given in the radially symmetric form: 
\begin{equation}\label{eq:spherically_symmetric}
	\rho(t,x)=\rho(t,r),\quad U(t,x)=\frac{x}{r}u(t,r),\quad r:=|x|.
\end{equation}
Then, after substituting \eqref{eq:spherically_symmetric} into \eqref{eq:NSK}, one finds that $(\rho,u)$ satisfies the following radially symmetric NSK equations:

\begin{align}
	\begin{aligned}\label{eq:NSK_spherical}
	&\rho_t +\frac{(r^{n-1}\rho u)_r}{r^{n-1}}=0,\quad t>0,\quad r>1,\\
	&\rho(u_t+uu_r)+P(\rho)_r =\mu\left(\frac{(r^{n-1}u)_r}{r^{n-1}}\right)_r +\kappa\rho\left(\rho_{rr}+\frac{n-1}{r}\rho_r\right)_r,
	\end{aligned}
\end{align}
subject to the initial data $(\rho_0,u_0)$, where $\mu :=2\nu +\lambda$ and $\rho_0(r)>0$. At $r=1$ and at $r=+\infty$, we impose the boundary conditions and the far-field condition given as
\begin{equation}\label{eq:boundary}
	u(t,1) = u_-,\quad \rho_r(t,1)=\rho_b,\quad\lim_{r\to+\infty}(\rho(t,r),u(t,r))=(\rho_+,0),\quad t\ge0,
\end{equation}
where $\rho_+>0$, $u_-$, and $\rho_b$ are constants. We also impose the following compatibility condition:
\begin{align}\label{eq:compatibility}
\left[\rho_0u_0u_{0r}+P(\rho_0)_r -\mu\left(\frac{(r^{n-1}u_0)_r}{r^{n-1}}\right)_r-\kappa\rho_0\left(\rho_{0rr}+\frac{n-1}{r}\rho_{0r}\right)_r\right]_{r=1}=0.
\end{align}

When $u_-=0$, the initial boundary value problem \eqref{eq:NSK_spherical}--\eqref{eq:boundary} is called an {\it impermeable wall problem}, while when $u_->0$ or $u_-<0$, an {\it inflow problem} or an {\it outflow problem} respectively. In our previous work \cite{Kpre}, we proved that under the smallness condition on the boundary data, there exists a unique stationary solution to each problem, and investigated their properties such as decay rates or vanishing capillarity limit.

In the present work, we focus on the impermeable wall problem ($u_-=0$) and our goal is to show the time-asymptotic behavior of the solution towards the unique stationary solution $(\tilde{\rho},0)$, where $\tilde{\rho}$ is a solution to the stationary equation
\begin{equation}\label{eq:stationary}
	P(\tilde{\rho})_r = \kappa\tilde{\rho}\left(\tilde{\rho}_{rr}+\frac{n-1}{r}\tilde{\rho}_r\right)_r,\quad \tilde{\rho}_r(1)=\rho_b,\quad \lim_{r\to+\infty}\tilde{\rho}(r)=\rho_+.
\end{equation}

\begin{theorem}\label{thm:main}
	Let $n\ge 3$ and $(\tilde{\rho},0)$ be the stationary solution to \eqref{eq:NSK_spherical} satisfying \eqref{eq:stationary}. Then, there exist positive constants $\delta_0$ and $\e_0$ such that the following holds. Suppose that the boundary value satisfies $|\rho_b|<\delta_0$ and the initial data $(\rho_0-\tilde{\rho},u_0)\in H^3(1,\infty)\times H^2(1,\infty)$ satisfy
	\[\|(\rho_0-\tilde{\rho},u_0)\|_{H^2(1,\infty)\times H^1(1,\infty)}<\e_0,\]
	the boundary conditions
	\[ u_0(1) = 0, \quad \rho_{0r}(1) = \rho_b,\]
	and the compatibility condition \eqref{eq:compatibility}. Then, there exists a unique global-in-time solution $(\rho,u)$ to \eqref{eq:NSK_spherical} satisfying
	\[(\rho-\tilde{\rho},u)\in C([0,\infty);H^3(1,\infty))\times C([0,\infty);H^2(1,\infty)).\]
	Furthermore, the solution asymptotically converges to the stationary solution:
	\[\lim_{t\to\infty}(\|\rho(t)-\tilde{\rho}\|_{W^{1,\infty}(1,\infty)}+\|u(t)\|_{L^\infty(1,\infty)})=0.\]
\end{theorem}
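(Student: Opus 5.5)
The proof follows the standard continuation scheme: local existence, a priori estimates, and convergence from integrability in time. Set the perturbation $\phi:=\rho-\tilde{\rho}$, $\psi:=u$. Subtracting \eqref{eq:stationary} from \eqref{eq:NSK_spherical} gives a system for $(\phi,\psi)$ with boundary conditions $\psi(t,1)=0$, $\phi_r(t,1)=0$, and $(\phi,\psi)\to 0$ as $r\to\infty$. From \cite{Kpre} we use the properties of the stationary solution: $|\tilde{\rho}-\rho_+|$ and its derivatives up to third order are bounded by $C\delta$ with $\delta:=|\rho_b|$, and they decay in $r$. The decay is at least like $r^{-(n-1)}$ for $\tilde\rho_r$, which for $n\ge3$ supplies the integrability weights needed below.

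\textbf{Local existence and continuation.} Local existence in $H^3\times H^2$ follows from a standard contraction or iteration argument for the parabolic--dispersive system; the compatibility condition \eqref{eq:compatibility} ensures $\psi_t\in L^2$ at $t=0$. Global existence then reduces to the a priori estimate
\[
\sup_{[0,T]}\big(\|\phi\|_{H^2}^2+\|\psi\|_{H^1}^2\big)+\int_0^T\big(\|\phi_r\|_{H^2}^2+\|\psi_r\|_{H^1}^2\big)\,dt\le C\big(\|\phi_0\|_{H^2}^2+\|\psi_0\|_{H^1}^2\big),
\]
valid under the assumption $N(T):=\sup_{[0,T]}(\|\phi\|_{H^2}+\|\psi\|_{H^1})\le\varepsilon$ with $\varepsilon,\delta$ small. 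The higher $H^3\times H^2$ regularity is propagated afterwards by a linear-type estimate.

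\textbf{Basic energy.} Multiply the momentum equation by $r^{n-1}\psi$ and the continuity equation by $r^{n-1}$ times the derivative of the relative potential $\Phi(\rho,\tilde\rho)=\int_{\tilde\rho}^{\rho}\frac{P(s)-P(\tilde\rho)}{s^2}\,ds$. Writing the capillary term as $\rho(\Delta_r\rho)_r$, we integrate by parts using $\rho_t=-r^{1-n}(r^{n-1}\rho\psi)_r$. This produces $\frac{d}{dt}\int r^{n-1}\frac{\kappa}{2}\phi_r^2$ together with the stationary cross terms. The boundary terms vanish because $\psi(1)=0$ and $\phi_r(1)=0$. The dissipation is $\mu\int r^{n-1}|r^{1-n}(r^{n-1}\psi)_r|^2\ge\mu\int r^{n-1}(\psi_r^2+(n-1)r^{-2}\psi^2)$. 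Error terms involving $\tilde\rho_r,\tilde\rho_{rr}$ are bounded by $C\delta\int r^{n-1}(\psi^2 r^{-2}+\phi_r^2)$; here the Hardy-type dissipation $\int r^{n-3}\psi^2$ absorbs the $\psi$-terms. Since the weight $r^{n-1}\ge1$, this also controls the unweighted norms.

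\textbf{Higher-order estimates and dissipation of $\phi$.} The main obstacle is recovering the dissipation of $\phi_r,\phi_{rr},\phi_{rrr}$, because the continuity equation has no diffusion. I would follow the NSK approach of multiplying the momentum equation by $-\phi_r$, or equivalently by $\partial_r$ of the density equation. The term $\kappa\rho(\Delta_r\phi)_r\cdot(-\phi_r)$ yields $\kappa\int\rho|\Delta_r\phi|^2$-type dissipation, and $P'(\rho)\phi_r^2$ yields $\int\phi_r^2$. The time-derivative term $\int\rho\psi_t\phi_r$ is rewritten as $\frac{d}{dt}\int\rho\psi\phi_r$ plus terms quadratic in $\psi_r$, which the basic energy already controls. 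The sign of the boundary term is the delicate point: at $r=1$ one gets contributions such as $\kappa\rho\Delta_r\phi\,\phi_r|_{r=1}=0$, since $\phi_r(1)=0$. Differentiated versions require the boundary values of $\phi_{rrr}$, which I would handle through the equation itself. For the next order, I would differentiate the system in $t$ rather than in $r$, since $\psi_t(1)=0$ and $\phi_{tr}(1)=0$ are preserved; $\psi_{rr}$ and $\phi_{rrr}$ are then recovered elliptically from the momentum equation. Combining with a small multiple of the $\phi_r$-estimate, and treating nonlinear terms via $\|f\|_{L^\infty}\le C\|f\|_{H^1}$ together with $\varepsilon$-smallness and the $\delta$-decay of $\tilde\rho$, closes the bound.

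\textbf{Convergence.} The a priori bound gives $\int_0^\infty(\|\phi_r\|_{H^1}^2+\|\psi_r\|^2)\,dt<\infty$, and from the equations $\int_0^\infty|\frac{d}{dt}(\|\phi_r\|_{H^1}^2+\|\psi_r\|^2)|\,dt<\infty$. Hence $\|\phi_r\|_{H^1}+\|\psi_r\|\to0$. Gagliardo--Nirenberg, $\|f\|_{L^\infty}^2\le2\|f\|\|f_r\|$, with bounded $\|\phi\|,\|\psi\|$, then yields $\|\phi\|_{W^{1,\infty}}+\|\psi\|_{L^\infty}\to0$.
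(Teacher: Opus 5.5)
There is a genuine gap, in the step that is supposed to complete the $H^2\times H^1$ a priori bound. Your basic energy identity, the $-\phi_r$ cross estimate and the convergence argument are fine and match the paper (Lemmas~\ref{lem:L2}--\ref{lem:cross} and the end of Section~\ref{sec:global}).

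\textbf{What is missing.} The first two estimates give $\sup_t(\|\phi\|_{H^1}^2+\|\psi\|^2)$ and $\int_0^T(\|G\|^2+\|\phi_r\|^2+\|\phi_{rr}\|^2)\,dt$. They do not give $\sup_t(\|\psi_r\|^2+\|\phi_{rr}\|^2)$ or $\int_0^T(\|\psi_{rr}\|^2+\|\phi_{rrr}\|^2)\,dt$. Your displayed estimate needs these, and so does the continuation criterion $N(t)<\varepsilon$.

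\textbf{Why differentiating in $t$ does not supply them.} That step jumps over this level. The resulting energy $\int_1^\infty(\rho\psi_t^2+\kappa\phi_{rt}^2)r^{n-1}\,dr$ is, via $\rho G_r=-\phi_{rt}-\rho_rG-\rho_{rr}\psi-\rho_r\psi_r$ and the momentum equation, comparable up to lower-order terms with $\|\psi_{rr}\|^2+\|\phi_{rrr}\|^2$. So it is an $H^3\times H^2$ quantity. Its value at $t=0$ is controlled only by $\|(\phi_0,\psi_0)\|_{H^3\times H^2}$, which the theorem does not assume small, and $\varepsilon_0$ is fixed before the data so cannot depend on it. Hence this estimate cannot give a bound by $C(\|\phi_0\|_{H^2}^2+\|\psi_0\|_{H^1}^2)$. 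Recovering $\sup_t\|\psi_r\|^2$ by interpolating between $\|\psi\|$ and $\|\psi_{rr}\|$ would make $\varepsilon_0$ depend on the higher norm. In the paper, the time-differentiated system appears only in Lemma~\ref{lem:highest}. There $\frac{d}{dt}\mathcal{F}\le C(1+\|G_r\|^2)(1+\mathcal{F})$ bounds the $H^3\times H^2$ norm on a finite interval $[0,T_*)$ for continuation, after the a priori estimate has already been closed.

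\textbf{How the paper fills the gap.} You need an estimate at the intermediate level, and the route you mention and then set aside is the one the paper takes. Lemma~\ref{lem:H1} differentiates the momentum equation in $r$ and tests with $\psi_r$. This gives energy $\frac12\|\sqrt{\rho}\psi_r\|^2+\frac{\kappa}{2}\|\phi_{rr}\|^2$ and dissipation $\|G_r\|^2$. The boundary term at $r=1$ is removed by evaluating the momentum equation at $r=1$, where $\psi=\psi_t=0$. What remains is $\psi_r(1)h'(\tilde\rho(1))\rho_b\phi(1)=O(\delta)\|\phi_r\|\|\psi_{rr}\|$.

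Lemma~\ref{lem:Phi} tests the undifferentiated momentum equation with $\psi_t$. This gives energy $\frac{\mu}{2}\|G\|^2+\kappa\langle\phi_r,\phi_{tr}\rangle$, which is $O(N^2)$ after an integration by parts, and dissipation $\|\psi_t\|^2$. The bad term $\kappa\|\phi_{tr}\|^2$ is absorbed by the $\|G_r\|^2$ from Lemma~\ref{lem:H1}. Then $\|A(\phi)_r\|^2\lesssim\mathcal{D}$ follows from the momentum equation, which supplies $\int\|\phi_{rrr}\|^2\,dt$.

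\textbf{An alternative closer to your idea.} To use $\psi_t(1)=0$ and $\phi_{tr}(1)=0$ with no boundary analysis, test the undifferentiated momentum equation with $-G_r r^{n-1}$. With $\rho G_r=-\phi_{tr}-\cdots$ and those boundary values, this gives $\frac{d}{dt}\big(\frac12\|\sqrt{\rho}G\|^2+\frac{\kappa}{2}\|A(\phi)\|^2\big)+\mu\|G_r\|^2\le\cdots$ with no boundary terms, at the correct level. You would still need the $\psi_t$-test to control the $\|A(\phi)_r\|$ in its error terms.

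(A minor point: your basic energy is in fact an exact identity with no $\tilde\rho$ error terms. This is because $\rho\Phi(\rho,\tilde\rho)=Q(\rho|\tilde\rho)$, and $\kappa\phi A(\tilde\rho)_r=h'(\tilde\rho)\tilde\rho_r\phi$ cancels the stationary pressure contributions.)
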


\begin{remark}
	Here, the constraint on the dimension $n\ge3$ is essential in our analysis, since this assumption is crucially used to obtain functional inequalities in Lemma \ref{lem:Hardy} and Lemma \ref{lem:exp-Hardy}. 
\end{remark}

\subsection*{Related works}
Time-asymptotic behavior of compressible fluid has been studied over the past decades. Although there is an extensive literature, we will focus on the results regarding radially symmetric models. For the history and recent progress in the large-time behavior of the one-dimensional compressible Navier--Stokes (NS) equations on the whole space $\R$ or the half space $\R_+:=\{x\in\R~|~x>0\}$, we refer to the recent papers \cite{HKKKO26,KVW25} and references therein.

Time-asymptotic behavior of the radially symmetric NS equations, that is when $\kappa=0$ in \eqref{eq:NSK_spherical}, was first investigated in \cite{J96}, where the well-posedness and time-asymptotic behavior for the impermeable wall problem were proved when $n=3$. This result is generalized in \cite{NNY04} to the large initial data case, in the presence of external potential force. The result was later extended to the radially symmetric Navier--Stokes--Fourier (NSF) equations \cite{NN08}. When the velocity at the boundary is non-zero, i.e., when the inflow/outflow problems are considered, the existence of the stationary solution and the convergence towards it are rather new subjects. The existence of the stationary solution to the inflow and outflow problems of NS equations is proved in \cite{HM21}, and the time-asymptotic behaviors of the outflow problem and inflow problem were investigated in \cite{HNS24,HNpre} and \cite{HHN26}, respectively. For the case of a viscous heat-conducting fluid, \cite{HMpre} proved the existence of a stationary solution to the radially symmetric NSF equations.

On the other hand, the time-asymptotic behavior of compressible viscous-capillary fluid has also been studied recently. For the one-dimensional NSK equations on the whole space $\R$, the convergence towards the rarefaction waves \cite{C12}, viscous-dispersive shock wave \cite{CHZ15,HKKL25} and their composite wave \cite{HK26} were proved. Time-asymptotic behavior of initial boundary value problems of the one-dimensional NSK equations is also studied in \cite{HKOpre,LTY22,LXC23,LZ21}. However, to the best of the author's knowledge, time-asymptotic behavior of the radially symmetric NSK equations \eqref{eq:NSK_spherical} has not been studied. Recently, the author proved in \cite{Kpre} that there exists a unique stationary solution to the impermeable wall, inflow, and outflow problems of \eqref{eq:NSK_spherical}, under the smallness assumption on the boundary data, and studied the vanishing capillarity limit of the stationary solution satisfying \eqref{eq:stationary}. As a follow-up, the aim of the present work is to prove the time-asymptotic behavior of the impermeable wall problem to the radially symmetric NSK equations towards its stationary solution. \\

The rest of the paper is organized as follows. In Section \ref{sec:2}, we provide several preliminaries that will be used throughout the paper. Then, we conduct the energy estimates on the perturbation between the solution $(\rho,u)$ and the stationary solution $(\tilde{\rho},0)$ in Section \ref{sec:perturbation_estimate}, which provides an {\it a priori} estimate on the $H^2\times H^1$ norm of the perturbation. Finally, in Section \ref{sec:global}, we combine the local existence theory and the {\it a priori} estimate obtained in Section \ref{sec:perturbation_estimate} to derive the global well-posedness by using a continuation argument, and as a result, we prove the desired time-asymptotic behavior in Theorem \ref{thm:main}.\\

\noindent {\bf Notation}. For any function $f:[1,\infty)\to \R$, we introduce the weighted $L^2$-norm and $L^\infty$-norm on $(1,\infty)$ as
\[\|f\|^2:=\|f\|^2_{L^2_{r^{n-1}}(1,\infty)}:=\int_1^\infty |f(r)|^2r^{n-1}\,dr,\quad \|f\|_{L^\infty}:=\sup_{r\ge 1}|f(r)|.\]
Accordingly, we denote the $L^2$-inner product between $f,g:[1,\infty)\to\R$ as 
\[\<f,g\>:=\int_1^\infty f(r)g(r)r^{n-1}\,dr.\] 
Furthermore, the Sobolev norms are also defined as
\[\|f\|_{H^k(1,\infty)}^2=\|f\|_{H^k}^2:=\sum_{i=0}^k\|\pa_r^{i} f\|^2=\int_1^\infty \left(\sum_{i=0}^k|\pa_r^i f(r)|^2\right)r^{n-1}\,dr.\]
Finally, we also use the notation
\[\|f\|^2_{L^2_{r^{n-m}}}:=\int_1^\infty |f(r)|^2r^{n-m}\,dr\]
for the $L^2$-norm with the other weight $r^{n-m}$.

\section{Preliminaries}\label{sec:2}
\setcounter{equation}{0}
In this section, we present the preliminaries, such as several functional inequalities and the existence and properties of the stationary solution $\tilde{\rho}$. We first collect some identities and inequalities on the functions defined on $[1,\infty)$ which will be used in the later estimates.

\begin{lemma}\label{lem:separate}
	For any function $f:[1,\infty)\to\R$ satisfying $f(1)=\lim_{r\to\infty}f(r)=0$, we have
	\[\int_1^\infty\left(f_r+\frac{n-1}{r}f\right)^2r^{n-1}\,dr = \|f_r\|^2+(n-1)\|f\|^2_{L^2_{r^{n-3}}}.\]
\end{lemma}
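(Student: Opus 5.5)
Expanding the square gives
\[\int_1^\infty\left(f_r+\frac{n-1}{r}f\right)^2r^{n-1}\,dr=\|f_r\|^2+2(n-1)\int_1^\infty f f_r r^{n-2}\,dr+(n-1)^2\int_1^\infty f^2r^{n-3}\,dr,\]
and the last term is $(n-1)^2\|f\|^2_{L^2_{r^{n-3}}}$. It therefore suffices to show that the cross term equals $-(n-1)(n-2)\|f\|^2_{L^2_{r^{n-3}}}$, since $(n-1)^2-(n-1)(n-2)=n-1$ then yields the claimed identity.

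For the cross term, write $ff_r=\tfrac12(f^2)_r$ and integrate by parts on $(1,R)$:
\[2(n-1)\int_1^R ff_r r^{n-2}\,dr=(n-1)\Big[f^2r^{n-2}\Big]_1^R-(n-1)(n-2)\int_1^R f^2r^{n-3}\,dr.\]
The boundary term at $r=1$ vanishes because $f(1)=0$. The only delicate point is the term at $r=R$, namely $(n-1)f(R)^2R^{n-2}$, which must vanish along some sequence $R\to\infty$. The hypothesis $f\to0$ alone does not control the weight $R^{n-2}$, so we use the implicit assumption that the right-hand side is finite, i.e.\ $f\in L^2_{r^{n-3}}$ and $f_r\in L^2_{r^{n-1}}$. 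This is how the lemma is used in the paper, for $H^1$-type perturbations.

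To justify this, note that by Cauchy--Schwarz $|ff_r|r^{n-2}=|f|r^{(n-3)/2}\,|f_r|r^{(n-1)/2}$ is integrable. Hence $(f^2r^{n-2})_r=2ff_rr^{n-2}+(n-2)f^2r^{n-3}$ is integrable, so $f^2r^{n-2}$ has a limit as $r\to\infty$. This limit must be zero, since otherwise $f^2r^{n-3}\sim c/r$ would fail to be integrable. Alternatively, one can prove the identity first for $f\in C_c^\infty$ and conclude by density.

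Letting $R\to\infty$ then gives the cross term $-(n-1)(n-2)\|f\|^2_{L^2_{r^{n-3}}}$, and combining with the expansion above finishes the proof. The vanishing of the boundary term at infinity is the only step requiring care; everything else is routine algebra.
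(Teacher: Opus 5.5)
Your proof is correct and follows the same route as the paper: expand the square, integrate the cross term $2(n-1)\int_1^\infty ff_r r^{n-2}\,dr$ by parts using $(r^{n-2})_r=(n-2)r^{n-3}$, and combine $(n-1)^2-(n-1)(n-2)=n-1$. Your justification that $f(R)^2R^{n-2}\to0$ is sound and fills in a step the paper leaves implicit. The same fact also follows from the tail bound $f(R)^2R^{n-2}\le \frac{1}{n-2}\int_R^\infty f_s^2 s^{n-1}\,ds$ in the proof of Lemma \ref{lem:Hardy}, which needs only $f_r\in L^2_{r^{n-1}}$.
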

\begin{proof}
	By direct expansion and using integration by parts, we get
	\begin{align*}
		\int_1^\infty \left(f_r+\frac{n-1}{r}f\right)^2r^{n-1}\,dr &= \|f_r\|^2+ (n-1)^2\|f\|_{L^2_{r^{n-3}}}^2+2(n-1)\int_1^\infty ff_rr^{n-2}\,dr\\
		&=\|f_r\|^2+ (n-1)^2\|f\|_{L^2_{r^{n-3}}}^2-(n-1)(n-2)\int_1^\infty f^2r^{n-3}\,dr\\
		&=\|f_r\|^2+ (n-1)\|f\|^2_{L^2_{r^{n-3}}}.
	\end{align*}
\end{proof}

\begin{lemma}\label{lem:Hardy}
	If $f(1)=\lim_{r\to\infty}f(r)=0$, then $\|f\|_{L^2_{r^{n-3}}}\le \frac{2}{n-2}\|f_r\|$. Moreover, if $\lim_{r\to\infty}f(r)=0$, we have $\|f\|_{L^\infty}^2\le \frac{1}{n-2}\|f_r\|^2$.
\end{lemma}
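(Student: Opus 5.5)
The first inequality is a weighted Hardy inequality; I would prove it by integrating by parts against the weight $r^{n-3}$ and closing with Cauchy--Schwarz. The second is a pointwise bound; I would obtain it by writing $f$ as an integral of $f_r$ from infinity and applying Cauchy--Schwarz with the weight $r^{n-1}$ split off. Throughout, I would first argue for smooth $f$ with enough decay (for instance, compactly supported in $[1,\infty)$, or vanishing at $1$ when needed) so that boundary terms are legitimate, and then pass to the general case by density.

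\textbf{Hardy inequality.} Since $r^{n-3}=\frac{1}{n-2}(r^{n-2})_r$, integration by parts gives
\[
\int_1^\infty f^2 r^{n-3}\,dr=\frac{1}{n-2}\Big[f^2r^{n-2}\Big]_1^\infty-\frac{2}{n-2}\int_1^\infty f f_r r^{n-2}\,dr .
\]
The boundary term at $r=1$ vanishes because $f(1)=0$; in fact it would even have a favourable sign without that assumption. The term at infinity vanishes for compactly supported approximants. Next I write $r^{n-2}=r^{(n-3)/2}r^{(n-1)/2}$ and apply Cauchy--Schwarz, which gives
\[
\|f\|^2_{L^2_{r^{n-3}}}\le \frac{2}{n-2}\|f\|_{L^2_{r^{n-3}}}\|f_r\|.
\]
Dividing through by $\|f\|_{L^2_{r^{n-3}}}$ yields the claim. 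Here $n\ge3$ is used to ensure $n-2>0$.

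\textbf{$L^\infty$ bound.} Since $f\to0$ at infinity, I write $f(r)=-\int_r^\infty f_s\,ds$. Cauchy--Schwarz then gives
\[
|f(r)|^2\le \left(\int_r^\infty s^{1-n}\,ds\right)\left(\int_r^\infty f_s^2s^{n-1}\,ds\right)=\frac{r^{2-n}}{n-2}\int_r^\infty f_s^2 s^{n-1}\,ds .
\]
The right-hand side is at most $\frac{1}{n-2}\|f_r\|^2$ because $r\ge1$. Taking the supremum over $r\ge1$ finishes the proof. Again, $n\ge3$ is needed so that $s^{1-n}$ is integrable at infinity.

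\textbf{Main obstacle.} The only delicate point is justifying the vanishing of the boundary term $f^2r^{n-2}$ at infinity when only $f\to0$ and $\|f_r\|<\infty$ are assumed. The cleanest fix is to use the pointwise bound from the second part, $|f(r)|^2\lesssim r^{2-n}\int_r^\infty f_s^2s^{n-1}\,ds$. This shows that $f^2r^{n-2}\le \frac{1}{n-2}\int_r^\infty f_s^2 s^{n-1}\,ds\to0$ as $r\to\infty$. So I would prove the $L^\infty$ estimate first and then use it to remove the boundary term in the Hardy argument, which avoids a separate density argument.
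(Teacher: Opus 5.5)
Your proposal is correct and follows the paper's proof essentially line for line: integration by parts against $\partial_r(r^{n-2})=(n-2)r^{n-3}$ plus Cauchy--Schwarz for the weighted Hardy bound, and $f(r)=-\int_r^\infty f_s\,ds$ with Cauchy--Schwarz against the weight $s^{n-1}$ for the $L^\infty$ bound. Your extra step, using $f(r)^2r^{n-2}\le\frac{1}{n-2}\int_r^\infty f_s^2s^{n-1}\,ds$ to remove the boundary term at infinity, is sound and supplies a justification the paper leaves implicit; it is cleanest to run it on $[1,R]$ and then let $R\to\infty$, so that dividing by $\|f\|_{L^2_{r^{n-3}}}$ is legitimate.
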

\begin{proof}
	Since $\pa_r(r^{n-2})=(n-2)r^{n-3}$, we use integration by parts and boundary condition to get
	\[(n-2)\int_1^\infty f^2 r^{n-3}\,dr = \int_1^\infty f^2\pa_r(r^{n-2})\,dr=-2\int_1^\infty ff_r r^{n-2}\,dr.\]
	Then, using the Cauchy--Schwarz inequality and $r\ge1$, we get
	\[(n-2)\|f\|^2_{L^2_{r^{n-3}}}\le 2\|f\|_{L^2_{r^{n-3}}}\|f_r\|_{L^2_{r^{n-1}}}=2\|f\|_{L^2_{r^{n-3}}}\|f_r\|,\]
	which yields the first desired inequality. Next, for any $r\ge 1$, we again use the Cauchy--Schwarz inequality to get
	\begin{align*}
		|f(r)|^2=\left|\int_r^\infty f_s\,ds\right|^2\le \int_r^\infty s^{-(n-1)}\,ds\int_r^\infty f_s^2s^{n-1}\,ds\le \frac{\|f_r\|^2}{(n-2)r^{n-2}}\le \frac{\|f_r\|^2}{n-2}.
	\end{align*}
	Taking supremum over $r\ge1$, we get the second inequality.
\end{proof}

\begin{lemma}\label{lem:exp-Hardy}
	If $\lim_{r\to\infty}f(r)=0$ and $\sigma>0$, we have
	\[\int_1^\infty f^2 e^{-2\sigma r}r^{n-1}\,dr\le C(n,\sigma)\|f_r\|^2.\]
\end{lemma}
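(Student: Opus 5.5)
The plan is to reduce the statement to the pointwise bound already contained in the second part of Lemma \ref{lem:Hardy}. Since $\lim_{r\to\infty}f(r)=0$ is the only hypothesis there, and here we assume exactly that, we have for every $r\ge1$
\[
|f(r)|^2=\left|\int_r^\infty f_s\,ds\right|^2\le \int_r^\infty s^{-(n-1)}\,ds\int_r^\infty f_s^2 s^{n-1}\,ds\le \frac{\|f_r\|^2}{(n-2)r^{n-2}}.
\]
This holds for $n\ge3$, as in the standing setting of the paper. This pointwise bound, with its gain of $r^{-(n-2)}$, is the only input needed. Note that no condition $f(1)=0$ is required, which is exactly why the exponential weight is introduced: it compensates for the lack of a boundary condition at $r=1$ that would otherwise be needed for the Hardy inequality in the first part of Lemma \ref{lem:Hardy}.

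Next, I will multiply the pointwise bound by $e^{-2\sigma r}r^{n-1}$ and integrate over $(1,\infty)$. This gives
\[
\int_1^\infty f^2e^{-2\sigma r}r^{n-1}\,dr\le \frac{\|f_r\|^2}{n-2}\int_1^\infty r\,e^{-2\sigma r}\,dr .
\]
The remaining integral is finite and explicitly computable, $\int_1^\infty re^{-2\sigma r}\,dr=e^{-2\sigma}\left(\frac{1}{2\sigma}+\frac{1}{4\sigma^2}\right)$. So we may take
\[
C(n,\sigma)=\frac{e^{-2\sigma}}{n-2}\left(\frac{1}{2\sigma}+\frac{1}{4\sigma^2}\right).
\]
If one wishes to avoid the sharper decay, the cruder bound $\|f\|_{L^\infty}^2\le\frac{1}{n-2}\|f_r\|^2$ also suffices. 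It gives the constant $\frac{1}{n-2}\int_1^\infty r^{n-1}e^{-2\sigma r}\,dr$, which is again finite for every $\sigma>0$ and every $n$.

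The only point requiring care, and the main potential obstacle, is justifying the first step when $\|f_r\|=\infty$ or when $f$ is merely absolutely continuous. If $\|f_r\|=\infty$, the inequality is trivial. Otherwise $f_r\in L^1(r,\infty)$ by Cauchy--Schwarz, since $s^{-(n-1)}$ is integrable for $n\ge3$. Hence $f(r)=-\int_r^\infty f_s\,ds$ is legitimate given $f(\infty)=0$, and the argument goes through verbatim.
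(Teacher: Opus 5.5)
Your proof is correct and is exactly the paper's argument: the pointwise bound $|f(r)|^2\le \|f_r\|^2/((n-2)r^{n-2})$ from Lemma \ref{lem:Hardy}, integrated against $e^{-2\sigma r}r^{n-1}$, gives the same constant $\frac{e^{-2\sigma}}{n-2}\bigl(\frac{1}{2\sigma}+\frac{1}{4\sigma^2}\bigr)$. One side remark is off, though it does not affect the proof. The exponential weight is not compensating for a missing condition $f(1)=0$; in the weighted Hardy identity the boundary term $-f(1)^2$ already has a favorable sign. Its actual role is to make up for the fact that $\|f_r\|^2$ controls only the $r^{n-3}$-weighted $L^2$ norm of $f$, not the $r^{n-1}$-weighted one.
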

\begin{proof}
	It follows from the previous lemma that for any $r\ge 1$, $|f(r)|^2\le \frac{1}{(n-2)r^{n-2}}\|f_r\|^2$. Therefore,
	\[\int_1^\infty f^2 e^{-2\sigma r}r^{n-1}\,dr\le \frac{\|f_r\|^2}{n-2}\int_1^\infty r e^{-2\sigma r}\,dr=\frac{1}{n-2}\left(\frac{1}{2\sigma}+\frac{1}{4\sigma^2}\right)e^{-2\sigma}\|f_r\|^2.\]
\end{proof}

Now, we recall the existence and decay properties of the stationary solution $\tilde{\rho}$.

\begin{proposition}\cite{Kpre}
	For sufficiently small $|\rho_b|$, there exists a unique smooth solution $\tilde{\rho}(r)$ to \eqref{eq:stationary} satisfying
	\[|\tilde{\rho}(r)-\rho_+|\le C_k|\rho_b|e^{-\sigma r},\quad |\tilde{\rho}^{(k)}(r)|\le C_k|\rho_b|e^{-\sigma r},\quad k\ge1,\quad r\ge 1\]
	for some $\sigma>0$.
\end{proposition}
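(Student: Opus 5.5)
This proposition is quoted from \cite{Kpre}. If I were to prove it independently, I would reduce the stationary problem to a scalar second-order elliptic problem and solve it by a fixed-point argument in an exponentially weighted space.

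\textbf{Reduction to a second-order problem.} Dividing \eqref{eq:stationary} by $\tilde\rho>0$ gives
\[
\frac{P'(\tilde\rho)}{\tilde\rho}\tilde\rho_r=\kappa\bigl(\Delta_r\tilde\rho\bigr)_r,\qquad \Delta_r f:=f_{rr}+\frac{n-1}{r}f_r .
\]
Let $h(\rho):=\int_{\rho_+}^{\rho}P'(s)/s\,ds$ be the enthalpy, so that $h(\rho_+)=0$. Integrating once in $r$ from $\infty$ and using the far-field condition $\tilde\rho\to\rho_+$ together with the decay of the derivatives yields
\[
\kappa\Delta_r\tilde\rho=h(\tilde\rho).
\]

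\textbf{Linearization.} Write $\phi:=\tilde\rho-\rho_+$. The equation becomes
\[
\kappa\bigl(\phi_{rr}+\tfrac{n-1}{r}\phi_r\bigr)-a\phi=N(\phi),\qquad a:=h'(\rho_+)=P'(\rho_+)/\rho_+>0,
\]
where $N(\phi)=O(\phi^2)$ is smooth. The boundary conditions are $\phi_r(1)=\rho_b$ and $\phi(\infty)=0$. The linear operator $L\phi=\kappa\Delta_r\phi-a\phi$ is a radial modified Helmholtz operator. It has two fundamental solutions. One is the decaying solution $\phi_d(r)=r^{-(n-2)/2}K_{(n-2)/2}(\sqrt{a/\kappa}\,r)$, which behaves like $r^{-(n-1)/2}e^{-\sqrt{a/\kappa}\,r}$. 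The other is a growing solution built from $I_{(n-2)/2}$. Since $\phi_d'(1)\neq0$ (the $K$-solution is strictly monotone), the linear Neumann problem with data $\rho_b$ and decay at infinity is uniquely solvable. Variation of constants gives a Green's function $G(r,s)$ satisfying $|G(r,s)|\lesssim e^{-\sqrt{a/\kappa}|r-s|}$, with the same bound for its derivatives.

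\textbf{Contraction.} Fix $\sigma\in(0,\sqrt{a/\kappa})$ and work in the space
\[
X=\Bigl\{\phi:\ \|\phi\|_X:=\sup_{r\ge1}e^{\sigma r}\bigl(|\phi|+|\phi_r|+|\phi_{rr}|\bigr)<\infty\Bigr\}.
\]
Define the map
\[
T\phi=\rho_b\,\frac{\phi_d}{\phi_d'(1)}+\int_1^\infty G(r,s)N(\phi(s))\,ds .
\]
Because $\sigma$ is strictly below the decay rate of the kernel, the kernel bound gives $\|T\phi\|_X\le C|\rho_b|+C\|\phi\|_X^2$ and $\|T\phi-T\psi\|_X\le C(\|\phi\|_X+\|\psi\|_X)\|\phi-\psi\|_X$. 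For $|\rho_b|$ small, $T$ is therefore a contraction on the ball of radius $2C|\rho_b|$. Its fixed point gives existence, together with the bound $|\phi|+|\phi_r|+|\phi_{rr}|\le C|\rho_b|e^{-\sigma r}$; positivity $\tilde\rho>0$ holds since $\phi$ is small. Higher derivatives follow by differentiating the equation and bootstrapping: $\phi^{(k)}$ is expressed through lower derivatives, the weights $1/r$ and their derivatives are bounded on $r\ge1$, and $N$ is smooth. This gives $|\tilde\rho^{(k)}|\le C_k|\rho_b|e^{-\sigma r}$ for every $k\ge1$.

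\textbf{Uniqueness and the main obstacle.} Uniqueness holds within the class of small solutions. Any solution with $\tilde\rho\to\rho_+$ and small data must in fact decay exponentially, by a comparison or maximum-principle argument for $L$ (the zeroth-order coefficient $-a$ has the good sign). Such a solution therefore lies in the contraction ball, and the fixed point there is unique. I expect the main obstacle to be the first step: justifying that a general solution has derivatives vanishing at infinity, which is what makes the integration constant zero. It is also the step that shows a general solution is small in $X$. I would handle it with an energy or comparison argument applied to $\kappa\Delta_r\phi=h(\rho_++\phi)$.
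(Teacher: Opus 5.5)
Your argument is correct, but it follows a different route from the paper. The paper gives no proof: it quotes existence, uniqueness and the bound $|\tilde\rho-\rho_+|\le C|\rho_b|e^{-\sigma r}$ from \cite{Kpre}, and only remarks that the decay of $\tilde\rho^{(k)}$ ``follows easily from the equation.''

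You make the statement self-contained. With the enthalpy normalized by $h(\rho_+)=0$, one integration turns \eqref{eq:stationary} into the second-order problem $\kappa A(\tilde\rho)=h(\tilde\rho)$, whose linearization is the radial modified Helmholtz operator. A contraction in an exponentially weighted $C^2$ space then gives existence and the decay of $\phi,\phi_r,\phi_{rr}$ at once, for any rate $\sigma<\sqrt{P'(\rho_+)/(\kappa\rho_+)}$. Compared with the paper's remark, this gives an explicit admissible decay rate and yields the first- and second-derivative bounds directly, instead of recovering them from the decay of $\tilde\rho-\rho_+$ alone. Your bootstrap for $k\ge 3$ is exactly the ``follows from the equation'' step the paper has in mind.

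Three small points.

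\begin{enumerate}
\item The kernel bound is not literally $e^{-\sqrt{a/\kappa}|r-s|}$. Because of the weight $s^{n-1}$ and the asymptotics $r^{-(n-1)/2}e^{\pm\sqrt{a/\kappa}\,r}$ of the two fundamental solutions, one only has $|G(r,s)|\lesssim (s/r)^{(n-1)/2}e^{-\sqrt{a/\kappa}|r-s|}$. The polynomial factor is harmless precisely because you took $\sigma$ strictly below $\sqrt{a/\kappa}$; for $s>r$ it is absorbed by $e^{-(\sqrt{a/\kappa}+\sigma)(s-r)}$.

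\item What you call the main obstacle is elementary. The quantity $h(\tilde\rho)-\kappa A(\tilde\rho)$ is constant, so $A(\tilde\rho)$ has a limit $d$. If $d\neq 0$, then $(r^{n-1}\tilde\rho_r)_r\approx d\,r^{n-1}$, hence $|\tilde\rho_r|\gtrsim r$, which contradicts $\tilde\rho\to\rho_+$. So $d=0$ and the integration constant vanishes.

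\item For uniqueness you need not push an arbitrary solution into the contraction ball. The difference $w$ of two positive solutions satisfies $\kappa A(w)=c(r)w$ with $c(r)=\int_0^1 h'\bigl(\theta\tilde\rho_1+(1-\theta)\tilde\rho_2\bigr)\,d\theta>0$, together with $w_r(1)=0$ and $w\to 0$. The maximum principle then gives $w\equiv 0$; at $r=1$ use $\kappa w_{rr}(1)=c(1)w(1)$. This gives uniqueness among all positive solutions, with no smallness required.
\end{enumerate}
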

In \cite{Kpre}, only the convergence of $|\tilde{\rho}(r)-\rho_+|$ is stated. However, the decay of the derivatives $\tilde{\rho}^{(k)}$ follows easily from the equation \eqref{eq:stationary}. 

\section{{\it A priori} estimate for the perturbation}\label{sec:perturbation_estimate}
\setcounter{equation}{0}

In this section, we present the {\it a priori} estimate for the perturbation of the solution $(\rho,u)$ to the radially symmetric NSK equations \eqref{eq:NSK_spherical} around the stationary solution $(\tilde{\rho},0)$ satisfying \eqref{eq:stationary}. To this end, we define the perturbations of the density and velocity as
\[\phi(t,r):=\rho(t,r)-\tilde{\rho}(r),\quad \psi(t,r):=u(t,r).\]
For simplicity, we introduce the following linear operators:
\[L(f):=f_{rr}+\frac{n-1}{r}f_r-\frac{n-1}{r^2}f=\left(\frac{(r^{n-1}f)_r}{r^{n-1}}\right)_r,\quad A(f):=f_{rr}+\frac{n-1}{r}f_r = \frac{(r^{n-1}f_r)_r}{r^{n-1}}.\]
Then, since $\tilde{\rho}_t = 0$, it follows from \eqref{eq:NSK_spherical}$_1$ that $\phi$ satisfies
\[\phi_t + \frac{(r^{n-1}\rho\psi)_r}{r^{n-1}}=0.\]
On the other hand, it follows from \eqref{eq:NSK_spherical}$_2$ that $\psi$ satisfies
\begin{equation}\label{eq:psi_1}
\rho\psi_t +\rho\psi\psi_r +P(\rho)_r = \mu L(\psi)+\kappa\rho A(\rho)_r.
\end{equation}
Using the equation \eqref{eq:stationary}, which can be written as $P(\tilde{\rho})_r = \kappa\tilde{\rho}A(\tilde{\rho})_r$, \eqref{eq:psi_1} can be modified as
\begin{equation*}
	\begin{aligned}\label{eq:psi_2}
	\rho\psi_t+\rho\psi\psi_r + (P(\rho)-P(\tilde{\rho}))_r &=\mu L(\psi) + \kappa\rho A(\rho)_r-\kappa\tilde{\rho}A(\tilde{\rho})_r\\
	&=\mu L(\psi) +\kappa\rho A(\phi)_r+\kappa\phi A(\tilde{\rho})_r\\
	&=\mu L(\psi) +\kappa\rho A(\phi)_r+h'(\tilde{\rho})\tilde{\rho}_r\phi.
	\end{aligned}
\end{equation*}
Here, we used 
\[\kappa A(\tilde{\rho})_r = \frac{P(\tilde{\rho})_r}{\tilde{\rho}}=h'(\tilde{\rho})\tilde{\rho}_r,\quad h(\rho):=\int^\rho \frac{P'(s)}{s}\,ds.\]
Thus, the perturbation $(\phi,\psi)$ satisfies the following equations:
\begin{align}
	\begin{aligned}\label{eq:perturbation}
		&\phi_t + \frac{(r^{n-1}\rho \psi)_r}{r^{n-1}}=0,\\
		&\rho\psi_t +\rho\psi\psi_r +(P(\rho)-P(\tilde{\rho}))_r = \mu L(\psi)+\kappa\rho A(\phi)_r+h'(\tilde{\rho})\tilde{\rho}_r\phi,
	\end{aligned}
\end{align}
subject to the boundary conditions
\begin{equation*}\label{boundary_perturbation}
	\psi(t,1) = 0,\quad \phi_r(t,1)=0,\quad \lim_{r\to+\infty}(\phi,\psi)(t,r)=(0,0),\quad t>0.
\end{equation*}

In the following, we present the energy estimate on the perturbations $(\phi,\psi)$. Let $N$ be a $H^2\times H^1$-norm of $(\phi,\psi)$:
\[N^2(t):=\|\phi(t)\|_{H^2}^2+\|\psi(t)\|_{H^1}^2=\|\phi\|^2+\|\phi_r\|^2+\|\phi_{rr}\|^2+\|\psi\|^2+\|\psi_r\|^2,\]
and $\mathcal{D}$ be the dissipation defined as
\[\mathcal{D}(t):=\|G\|^2+\|G_r\|^2+\|\phi_r\|^2+\|\phi_{rr}\|^2+\|\psi_t\|^2,\] 
where $G$ is 
\begin{equation}\label{eq:G}
	G:=\frac{(r^{n-1}\psi)_r}{r^{n-1}}=\psi_r + \frac{n-1}{r}\psi,\quad \mbox{and therefore}\quad L(\psi) = G_r.
\end{equation}
Furthermore, from Lemma \ref{lem:separate} and the boundary condition $\psi(1)=0$, it directly follows that
\begin{align*}
	\|G\|^2 =\|\psi_r\|^2 + (n-1)\|\psi\|_{L^2_{r^{n-3}}}^2.
\end{align*}
Similar decomposition also holds for $A(\phi)$: since $\phi_r(1)=0$,
\[\|A(\phi)\|^2 =\|\phi_{rr}\|^2+(n-1)\|\phi_r\|_{L^2_{r^{n-3}}}^2.\]
The goal of this section is to attain the following {\it a priori} estimate.

\begin{proposition}\label{prop:apriori}
	There exist small enough positive constants $\delta$ and $\e$ such that the following holds. Let $(\rho,u)$ be a solution to \eqref{eq:NSK_spherical} on $[0,T]$ satisfying the boundary condition \eqref{eq:boundary} with $|\rho_b|<\delta$, and let $\tilde{\rho}$ be a stationary solution satisfying \eqref{eq:stationary}. Suppose that the following a priori smallness condition holds:
	\begin{equation}\label{smallness_assumption}
		\sup_{0\le t\le T}N(t)<\e.
	\end{equation}
	Then, there exists a positive constant $C_0$ that is independent of $\delta$, $\e$, and $T$ such that 
	\begin{equation}\label{est:apriori}
		\sup_{0\le t\le T} N^2(t) + \int_0^T\mathcal{D}(s)\,ds \le C_0N^2(0).
	\end{equation}
\end{proposition}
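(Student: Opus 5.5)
We will obtain \eqref{est:apriori} by a chain of weighted energy estimates, each closed using the smallness \eqref{smallness_assumption}, the decay of $\tilde\rho$ (Proposition of \cite{Kpre}), and the Hardy-type inequalities of Lemmas \ref{lem:Hardy} and \ref{lem:exp-Hardy}.

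\textbf{Step 1: basic energy estimate.} Define the relative entropy $\Phi(\rho,\tilde\rho)=\int_{\tilde\rho}^{\rho}\frac{P(s)-P(\tilde\rho)}{s^2}\,ds$, which is equivalent to $\phi^2$ for small $\phi$. Multiply \eqref{eq:perturbation}$_2$ by $\psi r^{n-1}$ and the mass equation by the corresponding entropy multiplier. The key inputs are:
\begin{itemize}
\item the boundary conditions $\psi(1)=0$, $\phi_r(1)=0$;
\item the identity $\<L(\psi),\psi\>=\<G_r,\psi\>=-\|G\|^2$;
\item the capillary term: $\<\kappa\rho A(\phi)_r,\psi\>=-\kappa\<A(\phi),(r^{n-1}\rho\psi)_r/r^{n-1}\>=\kappa\<A(\phi),\phi_t\>=-\frac{\kappa}{2}\frac{d}{dt}\|\phi_r\|^2$, using the mass equation and $\phi_r(1)=0$.
\end{itemize}
The commutator with $\tilde\rho$ (the $h'(\tilde\rho)\tilde\rho_r\phi\psi$ term and the $\tilde\rho_r$ terms arising from $P(\rho)-P(\tilde\rho)$) is bounded by $C\delta\int e^{-\sigma r}(\phi^2+\psi^2)r^{n-1}dr\le C\delta(\|\phi_r\|^2+\|G\|^2)$ via Lemma \ref{lem:exp-Hardy}. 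Cubic terms are bounded by $C\e\mathcal D$ using $\|\psi\|_{L^\infty}^2\le C\|\psi_r\|^2\le C\|G\|^2$ (Lemma \ref{lem:Hardy}). This gives
\[\|(\phi,\psi)(t)\|^2+\|\phi_r(t)\|^2+\int_0^t\|G\|^2\le CN^2(0)+C(\delta+\e)\int_0^t\mathcal D.\]

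\textbf{Step 2: dissipation of $\phi_r,\phi_{rr}$.} There is no direct dissipation for $\phi$, so we use the effective-velocity/Bresch--Desjardins device. Rewrite $\mu G_r$ using the mass equation as $G=-\phi_t/\rho-\ldots$, so that $\mu G_r$ contributes $-\mu(\phi_t/\rho)_r$ plus lower order. Then test \eqref{eq:perturbation}$_2$ against $\phi_r/\rho$ (equivalently, test the mass equation against $A(\phi)$ with the appropriate weight). The pressure term gives $\<P'(\rho)\phi_r,\phi_r/\rho\>\gtrsim\|\phi_r\|^2$. The capillary term gives $\kappa\<A(\phi)_r,\phi_r\>=-\kappa\|A(\phi)\|^2$, and by the decomposition of $\|A(\phi)\|^2$ this dominates $\|\phi_{rr}\|^2$. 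The time derivative produces $\frac{d}{dt}\<\psi,\phi_r\>$ plus a term controlled by $\|G\|^2$. Adding this to Step 1 with a small multiplier yields control of $\int\|\phi_r\|^2+\|\phi_{rr}\|^2$.

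\textbf{Step 3: higher-order estimates.} Multiply \eqref{eq:perturbation}$_2$ by $-G_r$ (or by $\psi_t$). Since $L(\psi)=G_r$, this produces $\frac{d}{dt}\|\psi_r\|^2$ (via $\frac{d}{dt}\|G\|^2$ and Lemma \ref{lem:separate}) together with the dissipation $\|G_r\|^2$. The capillary coupling $\kappa\<A(\phi)_r,G_r\>$ is handled by differentiating the mass equation: $\phi_{tr}\approx-\rho G_r$, so that $\<A(\phi)_r,G_r\>$ equals $-\frac{1}{2\rho}\frac{d}{dt}\|A(\phi)\|^2$ up to lower-order terms. This yields $\sup\|\phi_{rr}\|^2$ and $\int\|G_r\|^2$. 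Finally, $\|\psi_t\|^2$ is read off from the momentum equation, since every other term is already in $\mathcal D$.

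\textbf{Main obstacle.} The hardest part is the higher-order capillary terms: the third-order quantity $A(\phi)_r$ and the commutators $[\partial_r,\rho]$ and $[\partial_r,(n-1)/r]$, whose boundary terms at $r=1$ must be shown to vanish or be controlled. The point is that for a radial function with $\phi_r(1)=0$ one has $A(\phi)=0$ at $r=1$ only modulo a trace term, and one needs the compatibility of the data to handle it. Here we would exploit $\psi(1)=0$, which gives $\psi_t(1)=0$ and $\phi_t(1)=-\rho(1)G(1)$. Lower-weight terms such as $\|\psi\|_{L^2_{r^{n-3}}}$ are absorbed using Lemma \ref{lem:Hardy}, which is valid since $n\ge3$. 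Summing Steps 1–3 with suitable constants and choosing $\delta,\e$ small closes \eqref{est:apriori}.
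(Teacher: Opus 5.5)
Your Steps 1 and 2 are sound and essentially reproduce Lemmas \ref{lem:L2} and \ref{lem:cross}. The density $\rho\Phi(\rho,\tilde\rho)$ coincides with $Q(\rho|\tilde\rho)$, and the $\tilde\rho$-terms in fact cancel exactly. Your multiplier $-G_r$ in Step 3 is also a legitimate variant of Lemma \ref{lem:H1}. Since $\psi_t(1)=\phi_{tr}(1)=0$ and $\<A(\phi)_r,\phi_{tr}\>=-\frac12\frac{d}{dt}\|A(\phi)\|^2$ exactly, it yields $\frac{d}{dt}\bigl(\frac12\|\sqrt{\rho}G\|^2+\frac{\kappa}{2}\|A(\phi)\|^2\bigr)+\mu\|G_r\|^2$ with no boundary terms at $r=1$. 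The compatibility condition plays no role in the a priori estimate.

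The gap is the last sentence of Step 3. You cannot ``read off'' $\|\psi_t\|^2$ from \eqref{eq:perturbation}$_2$, because that equation contains $\kappa\rho A(\phi)_r$. This is a third derivative of $\phi$ that is neither in $\mathcal D$ nor controlled by $N$, so the argument is circular. The same quantity hides in your ``lower-order terms''. After substituting $\rho G_r=-\phi_{tr}-\rho_rG-\rho_{rr}\psi-\rho_r\psi_r$, you are left with $\kappa\<A(\phi)_r,\rho_rG+\rho_{rr}\psi+\rho_r\psi_r\>$, and the time-derivative term leaves $\<\rho_r\psi_t,G\>$. These can be bounded by $C(\delta+\e)\mathcal D$ only if $\|A(\phi)_r\|^2$ or $\|\psi_t\|^2$ is already dissipated. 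Integrating by parts instead produces a boundary term proportional to $\kappa\rho_b\phi_{rr}(1)\psi_r(1)$, whose trace again requires $\phi_{rrr}$. So Step 3 does not close as written, and $\int_0^T\|\psi_t\|^2$, which is part of the claimed bound, is never proved.

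The missing ingredient is the independent estimate of Lemma \ref{lem:Phi}. Test \eqref{eq:perturbation}$_2$ with $\psi_t$ to obtain $\frac{d}{dt}\bigl(\frac{\mu}{2}\|G\|^2+\kappa\<\phi_r,\phi_{tr}\>\bigr)+\|\sqrt{\rho}\psi_t\|^2$. The bad-signed term $\kappa\|\phi_{tr}\|^2\le C\|G_r\|^2+C(\delta^2+\e^2)\mathcal D$ is absorbed by the $G_r$-dissipation, because this estimate is added with a weight $w_4$ much smaller than that of the $G_r$-estimate. Only then is the momentum equation used in the reverse direction, $\|A(\phi)_r\|^2\le C\mathcal D$, to close the $A(\phi)_r$ terms.

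So $-G_r$ and $\psi_t$ are not interchangeable multipliers, as your ``(or by $\psi_t$)'' suggests. Each gives a different dissipation, and both are needed in a definite weight hierarchy. You must also show that the indefinite term $\kappa\<\phi_r,\phi_{tr}\>$ is $O(N^2)$, so the weighted functional stays equivalent to $N^2$. The paper does this by rewriting it as $\<\rho G,A(\phi)\>-\<\phi_r,\rho_{rr}\psi\>-\<\phi_r,\rho_r\psi_r\>$.
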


The {\it a priori} smallness assumption \eqref{smallness_assumption} implies that the $L^\infty$-norms of the perturbations are also small, that is
\begin{equation*}\label{smallness_Linf}
	\sup_{0\le t\le T}\|(\phi,\psi)\|_{L^\infty}<C\e.
\end{equation*}
Since the stationary solution $\tilde{\rho}$ has positive lower and upper bounds, \eqref{smallness_assumption} implies that there exist $0<\rho_m<\rho_M$ such that
\[\rho_m<\rho(t,r),\tilde{\rho}(r)<\rho_M,\quad\mbox{for all}\quad 0<t<T,\quad r\ge 1.\]

Let us briefly outline the proof of Proposition \ref{prop:apriori}. In the following, we will obtain four estimates on the different energy functionals in Lemma \ref{lem:L2}--Lemma \ref{lem:Phi}. These energy functionals are combined to construct another energy functional $\mathcal{L}$ defined in \eqref{def:L}, which is shown to be equivalent to $N^2$.

We start with the $L^2$-estimate on the perturbations $(\phi,\psi)$ to control $N(t)$.

\subsection{$L^2$-estimate}
To obtain $L^2$-estimate, we define the internal energy and its relative quantity as
\[Q(\rho):=\int^\rho h(\eta)\,d\eta,\quad \mbox{and}\quad Q(\rho|\tilde{\rho}):=Q(\rho)-Q(\tilde{\rho})-Q'(\tilde{\rho})(\rho-\tilde{\rho}).\]
In particular, if $P(\rho) = \rho^\gamma$ with $\gamma \ge1$, then
\[Q(\rho)=\begin{cases}
	\frac{\rho^\gamma}{\gamma-1}\quad &\mbox{if}\quad \gamma>1,\\
	\rho\log\rho-\rho\quad&\mbox{if}\quad \gamma=1,
\end{cases}\quad\mbox{and}\quad Q(\rho|\tilde{\rho})=\begin{cases}
	\frac{1}{\gamma-1}(\rho^\gamma-\tilde{\rho}^\gamma-\gamma\tilde{\rho}^{\gamma-1}(\rho-\tilde{\rho})),\quad&\mbox{if}\quad \gamma>1,\\
	\rho\log\frac{\rho}{\tilde{\rho}}-(\rho-\tilde{\rho}),\quad&\mbox{if}\quad \gamma=1.
\end{cases}\]

\begin{lemma}\label{lem:L2}
	Under the same assumptions as in Proposition \ref{prop:apriori}, we have
	\[\frac{d}{dt}\int_1^\infty \left(\frac{1}{2}\rho\psi^2 + Q(\rho|\tilde{\rho})+\frac{\kappa}{2}\phi_r^2\right)r^{n-1}\,dr+\mu\|G\|^2=0.\]
\end{lemma}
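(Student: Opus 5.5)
The plan is to test the momentum equation of \eqref{eq:perturbation} against $\psi r^{n-1}$ and the continuity equation against $(h(\rho)-h(\tilde\rho))r^{n-1}$ together with a capillarity multiplier, and then check that every cross term cancels exactly. Throughout, the boundary conditions $\psi(1)=0$, $\phi_r(1)=0$ and decay at infinity kill all boundary terms. Since $\tilde\rho$ is independent of $t$, we have
\[\partial_t Q(\rho|\tilde\rho)=(h(\rho)-h(\tilde\rho))\phi_t=-(h(\rho)-h(\tilde\rho))\frac{(r^{n-1}\rho\psi)_r}{r^{n-1}}.\]
Integrating by parts against $r^{n-1}$ turns this into $\<\rho\psi,(h(\rho)-h(\tilde\rho))_r\>$.

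\textbf{Kinetic energy.} Using the continuity equation, $\partial_t(\tfrac12\rho\psi^2)+\frac{(r^{n-1}\rho\psi\cdot\frac12\psi^2)_r}{r^{n-1}}=\psi(\rho\psi_t+\rho\psi\psi_r)$. The flux term integrates to zero, so $\frac{d}{dt}\int\frac12\rho\psi^2r^{n-1}=\<\psi,\rho\psi_t+\rho\psi\psi_r\>$. Substituting the momentum equation gives four contributions:
\begin{itemize}
\item Viscous term: $\mu\<\psi,G_r\>=-\mu\|G\|^2$, by integration by parts with $(r^{n-1}\psi)_r=r^{n-1}G$.
\item Pressure term: $-\<\psi,(P(\rho)-P(\tilde\rho))_r\>$.
\item Capillarity term: $\kappa\<\rho\psi,A(\phi)_r\>$.
\item Source term: $\<\psi,h'(\tilde\rho)\tilde\rho_r\phi\>$.
\end{itemize}

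\textbf{Pressure cancellation.} Since $P(\rho)_r=\rho h(\rho)_r$, we can write
\[(P(\rho)-P(\tilde\rho))_r=\rho(h(\rho)-h(\tilde\rho))_r+\rho h(\tilde\rho)_r-\tilde\rho h(\tilde\rho)_r=\rho(h(\rho)-h(\tilde\rho))_r+\phi h'(\tilde\rho)\tilde\rho_r.\]
Hence the pressure term plus the source term equals $-\<\rho\psi,(h(\rho)-h(\tilde\rho))_r\>$, which cancels exactly against the $Q$ contribution computed above.

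\textbf{Capillarity cancellation.} Here
\[\frac{d}{dt}\int\frac\kappa2\phi_r^2r^{n-1}=\kappa\<\phi_r,\phi_{tr}\>=-\kappa\<A(\phi),\phi_t\>=\kappa\Big\langle A(\phi),\frac{(r^{n-1}\rho\psi)_r}{r^{n-1}}\Big\rangle,\]
where the middle step uses $\phi_r(1)=0$. One more integration by parts gives $-\kappa\<A(\phi)_r,\rho\psi\>$, since the boundary term carries $\rho\psi r^{n-1}$, which vanishes at $r=1$. This cancels the capillarity term.

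Adding the three time derivatives yields the stated identity. The main point needing care is the bookkeeping of boundary terms. For the $\phi_r^2$ integration by parts we need $\phi_r(1)=0$. For the others we need $\psi(1)=0$ and sufficient decay at infinity, which holds for the $H^3\times H^2$ regularity assumed.
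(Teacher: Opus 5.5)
Your proposal is correct and follows essentially the same route as the paper: multiply the momentum equation by $\psi r^{n-1}$, use the continuity equation to turn the convective, pressure/source, and capillarity terms into time derivatives of the kinetic, relative internal, and $\frac{\kappa}{2}\phi_r^2$ energies, with boundary terms killed by $\psi(1)=0$ and $\phi_r(1)=0$. The only difference is cosmetic. You group the pressure and source terms as $\rho\,(h(\rho)-h(\tilde\rho))_r$, whereas the paper writes them as $\rho h'(\rho)\rho_r-\rho h'(\tilde\rho)\tilde\rho_r$ before matching against $\frac{d}{dt}\int Q(\rho|\tilde\rho)\,r^{n-1}\,dr$.
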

\begin{proof}
	We multiply \eqref{eq:perturbation}$_2$ by $\psi$ and integrate over $(1,\infty)$ to get
	\begin{equation}
		\begin{aligned}\label{est:L2-1}
		\int_1^\infty &\left(\rho\psi\psi_t +\rho\psi^2\psi_r+(P(\rho)-P(\tilde{\rho}))_r\psi \right)r^{n-1}\,dr\\
		&=\int_1^\infty\left(\mu L(\psi)\psi+\kappa\rho A(\phi)_r\psi+h'(\tilde{\rho})\tilde{\rho}_r\phi\psi\right)r^{n-1}\,dr.
		\end{aligned}
	\end{equation}
	We first consider the left-hand side of \eqref{est:L2-1}. Using \eqref{eq:NSK_spherical}$_1$ and integration by parts, we get
	\begin{align*}
		\int_1^\infty \rho\psi\psi_t r^{n-1}\,dr &= \frac{1}{2}\frac{d}{dt}\int_1^\infty \rho\psi^2 r^{n-1}\,dr -\frac{1}{2}\int_1^\infty \rho_t \psi^2 r^{n-1}\,dr\\
		&=\frac{1}{2}\frac{d}{dt}\int_1^\infty \rho\psi^2 r^{n-1}\,dr +\frac{1}{2}\int_1^\infty (r^{n-1}\rho\psi)_r \psi^2 \,dr\\
		&=\frac{1}{2}\frac{d}{dt}\int_1^\infty \rho\psi^2 r^{n-1}\,dr -\int_1^\infty \rho\psi^2 \psi_r r^{n-1}\,dr.
	\end{align*}
	Thus, the first two terms of the left-hand side of \eqref{est:L2-1} become
	\[\int_1^\infty (\rho\psi\psi_t+\rho\psi^2\psi_r)r^{n-1}\,dr = \frac{1}{2}\frac{d}{dt}\int_1^\infty \rho\psi^2 r^{n-1}\,dr.\]
	Also, we use $P'(\rho) = \rho h'(\rho)$ to get
	\begin{align*}
		\int_1^\infty (P(\rho)-P(\tilde{\rho}))_r\psi r^{n-1}\,dr=\int_1^\infty \rho h'(\rho)\rho_r \psi r^{n-1}\,dr-\int_1^\infty \tilde{\rho}h'(\tilde{\rho})\tilde{\rho}_r \psi r^{n-1}\,dr.
	\end{align*}
	Next, we estimate the right-hand side of \eqref{est:L2-1}. Noticing that $L(\psi)=G_r$ and using \eqref{eq:G}, the first term of the right-hand side of \eqref{est:L2-1} becomes
	\[\mu\int_1^\infty L(\psi)\psi r^{n-1}\,dr = -\mu\int_1^\infty G(r^{n-1}\psi)_r\,dr=-\mu\int_1^\infty G^2 r^{n-1}\,dr=-\mu\|G\|^2,\]
	On the other hand, the second term of the right-hand side of \eqref{est:L2-1} can be reformulated by using \eqref{eq:perturbation}$_1$ as
	\begin{align*}
		\int_1^\infty \rho A(\phi)_r\psi r^{n-1}\,dr&=-\int_1^\infty A(\phi)(\rho\psi r^{n-1})_r\,dr = \int_1^\infty A(\phi)\phi_t r^{n-1}\,dr\\
		&=\int_1^\infty \phi_t (r^{n-1}\phi_r)_r\,dr=-\int_1^\infty \phi_{tr}\phi_r r^{n-1}\,dr=-\frac{1}{2}\frac{d}{dt}\int_1^\infty \phi_r^2 r^{n-1}\,dr.
	\end{align*}	
	Combining all the above computations, we get
	\begin{align*}
		\frac{d}{dt}&\int_1^\infty\left(\frac{1}{2}\rho\psi^2+\frac{\kappa}{2}\phi_r^2\right)r^{n-1}\,dr+\mu\|G\|^2+\int_1^\infty \rho h'(\rho)\rho_r\psi r^{n-1}\,dr-\int_1^\infty \rho h'(\tilde{\rho})\tilde{\rho}_r\psi r^{n-1}\,dr=0.
	\end{align*}
	Finally, observing
	\begin{align*}
		\frac{d}{dt}\int_1^\infty Q(\rho|\tilde{\rho})r^{n-1}\,dr&=\int_1^\infty \left(Q'(\rho)\rho_t-Q'(\tilde{\rho})\tilde{\rho}_t -Q''(\tilde{\rho})\phi\tilde{\rho}_t -Q'(\tilde{\rho})(\rho-\tilde{\rho})_t\right)r^{n-1}\,dr\\
		&=\int_1^\infty (Q'(\rho)-Q'(\tilde{\rho}))\rho_t r^{n-1}\,dr = -\int_1^\infty  (h(\rho)-h(\tilde{\rho}))(r^{n-1}\rho\psi)_r\,dr\\
		&=\int_1^\infty (h'(\rho)\rho_r-h'(\tilde{\rho})\tilde{\rho}_r)\rho\psi r^{n-1}\,dr,
	\end{align*}
	we have the desired estimate.
\end{proof}

Since $\rho$ and $\tilde{\rho}$ are bounded below and above, the relative internal energy $Q(\rho|\tilde{\rho})$ is equivalent to $|\rho-\tilde{\rho}|^2=|\phi|^2$: there exists a positive constant $C$ that only depends on $\rho_m$ and $\rho_M$ such that
\[C^{-1}|\rho-\tilde{\rho}|^2\le Q(\rho|\tilde{\rho})\le C|\rho-\tilde{\rho}|^2.\]
Hence, the quantity $\int_1^\infty Q(\rho|\tilde{\rho})r^{n-1}\,dr$ is equivalent to the $L^2$-norm of $\phi$. Therefore, Lemma \ref{lem:L2} implies that we have a control on $H^1$-norm of $\phi=\rho-\tilde{\rho}$ and $L^2$-norm of $\psi=u$ and the dissipation $\|G\|^2$. Next, we derive the estimate on the cross term between $\psi$ and $\phi_r$, which provides us further dissipation terms $\|\phi_r\|^2$ and $\|\phi_{rr}\|^2$.

\begin{lemma}\label{lem:cross}
	Under the same assumptions as in Proposition \ref{prop:apriori}, there exist positive constants $c_2$ and $C_2$ such that 
	\[\frac{d}{dt}\int_1^\infty \rho\psi\phi_r r^{n-1}\,dr +c_2(\|\phi_r\|^2+\|\phi_{rr}\|^2)\le C_2\|G\|^2+C(\delta+\e^2)\mathcal{D}.\]
\end{lemma}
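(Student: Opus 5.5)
We differentiate $\int_1^\infty \rho\psi\phi_r r^{n-1}\,dr$ in time and write
\[\frac{d}{dt}\int_1^\infty \rho\psi\phi_r r^{n-1}\,dr=\int_1^\infty (\rho\psi_t+\rho\psi\psi_r)\phi_r r^{n-1}\,dr+\int_1^\infty(\rho_t\psi-\rho\psi\psi_r)\phi_r r^{n-1}\,dr+\int_1^\infty\rho\psi\phi_{rt}r^{n-1}\,dr.\]
In the first integral we substitute \eqref{eq:perturbation}$_2$. The capillarity term yields the main dissipation. Write $A(\phi)_r=(\phi_{rr}+\frac{n-1}{r}\phi_r)_r$ and integrate by parts against $\rho\phi_r r^{n-1}$. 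Since $\phi_r(1)=0$, the boundary term vanishes, and we get
\[\kappa\int_1^\infty\rho A(\phi)_r\phi_r r^{n-1}\,dr=-\kappa\int_1^\infty A(\phi)(\rho\phi_r r^{n-1})_r\,dr=-\kappa\int_1^\infty\rho A(\phi)^2r^{n-1}\,dr-\kappa\int_1^\infty A(\phi)\rho_r\phi_r r^{n-1}\,dr.\]
By the identity $\|A(\phi)\|^2=\|\phi_{rr}\|^2+(n-1)\|\phi_r\|^2_{L^2_{r^{n-3}}}$, this gives $-\kappa\rho_m\|\phi_{rr}\|^2$. The remainder $\rho_r=\tilde\rho_r+\phi_r$ is bounded by $C(\delta+\e)(\|\phi_r\|^2+\|\phi_{rr}\|^2)$, using $\|\phi_r\|_{L^\infty}\le C\|\phi_{rr}\|$ from Lemma \ref{lem:Hardy}.

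The pressure term gives the $\|\phi_r\|^2$ dissipation. Write $(P(\rho)-P(\tilde\rho))_r=P'(\rho)\phi_r+(P'(\rho)-P'(\tilde\rho))\tilde\rho_r$. The first piece gives $-\int P'(\rho)\phi_r^2r^{n-1}\le-c\|\phi_r\|^2$. The second piece and $h'(\tilde\rho)\tilde\rho_r\phi\phi_r$ carry the factor $|\tilde\rho_r|\le C\delta e^{-\sigma r}$, so Lemma \ref{lem:exp-Hardy} applied to $\phi$ bounds them by $C\delta\|\phi_r\|^2$. The viscous term $\mu\int G_r\phi_r r^{n-1}$ is handled with Young's inequality as $\le\eta\|\phi_r\|^2+C_\eta\|G_r\|^2$. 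However, $\|G_r\|^2$ is not on the right-hand side of the lemma except through $\mathcal D$ with a small coefficient, so instead I would integrate by parts:
\[\mu\int_1^\infty G_r\phi_r r^{n-1}=-\mu\int_1^\infty G\,A(\phi)r^{n-1}\le\eta\|A(\phi)\|^2+C_\eta\|G\|^2,\]
where the boundary term vanishes since $\phi_r(1)=0$. This produces exactly the $C_2\|G\|^2$ term. Here $\|A(\phi)\|^2$ is bounded by $\|\phi_{rr}\|^2+C\|\phi_r\|^2$, and then absorbed.

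Next we handle the terms involving $\phi_t$. In the last integral, we integrate by parts using $\psi(1)=0$ and then use \eqref{eq:perturbation}$_1$:
\[\int_1^\infty\rho\psi\phi_{rt}r^{n-1}\,dr=-\int_1^\infty(r^{n-1}\rho\psi)_r\phi_t\,dr=\int_1^\infty\phi_t^2 r^{n-1}\,dr.\]
Now $\phi_t=-(\rho G+\rho_r\psi)$, so this term is bounded by $C\|G\|^2+C(\delta+\e^2)\mathcal D$. The piece $\rho_r\psi=(\tilde\rho_r+\phi_r)\psi$ is controlled by the exponential weight via Lemma \ref{lem:exp-Hardy} (giving $\delta\|\psi_r\|^2\le\delta\|G\|^2$), and by $\|\phi_r\|_{L^\infty}^2\|\psi\|^2\le\e^2\|\phi_{rr}\|^2$. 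Similarly, $\rho_t\psi\phi_r=-(\rho G+\rho_r\psi)\psi\phi_r$ and the convective terms are cubic. Each is bounded by $\|\psi\|_{L^\infty}\le C\|\psi_r\|\le C\|G\|$ times $\|G\|\|\phi_r\|$ or similar, giving $C\e\mathcal D$.

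The main obstacle is bookkeeping the weights: every lower-order quantity ($\|\psi\|$, $\|\phi\|$) must be converted into dissipation. I would do this only through the $r^{n-3}$ Hardy inequality and the $L^\infty$ bounds of Lemma \ref{lem:Hardy}, or through exponential weights from $\tilde\rho_r$ via Lemma \ref{lem:exp-Hardy}, since $\|\phi\|^2$ and $\|\psi\|^2$ are not in $\mathcal D$. Once this is done, we choose $\eta$ small and absorb to obtain the claimed inequality.
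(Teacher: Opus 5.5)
Your proposal is correct and is essentially the paper's proof: you test the momentum equation against $\phi_r r^{n-1}$, integrate the capillary and viscous terms by parts using $\phi_r(1)=0$, turn $\int_1^\infty\rho\psi\phi_{rt}r^{n-1}\,dr$ into $\|\phi_t\|^2$ via the continuity equation and $\psi(1)=0$, and handle the $\tilde\rho_r$-terms with Lemma \ref{lem:exp-Hardy}; your $-\mu\int_1^\infty G\,A(\phi)r^{n-1}\,dr$ is exactly the paper's $-\mu\int_1^\infty G\phi_{rr}r^{n-1}\,dr-\mu(n-1)\int_1^\infty G\phi_r r^{n-2}\,dr$. The differences are cosmetic: you substitute $\rho_t=-(\rho G+\rho_r\psi)$ directly, whereas the paper integrates by parts and gets a cancellation leaving only $\int_1^\infty\rho\psi^2\phi_{rr}r^{n-1}\,dr$, and your cubic terms come out as $C\e\|G\|^2$-type rather than $C\e^2\mathcal{D}$, which is harmless since they fold into $C_2\|G\|^2$ (or, by Young, into $\tau\|\phi_r\|^2+C\e^2\|G\|^2$).
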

\begin{proof}
	We multiply \eqref{eq:perturbation}$_2$ by $\phi_r r^{n-1}$ and integrate over $(1,\infty)$ to get
	\begin{align}
		\begin{aligned}\label{est:cross-00}
		\int_1^\infty&\left(\rho\psi_t \phi_r +\rho\psi\psi_r\phi_r +(P(\rho)-P(\tilde{\rho}))_r\phi_r\right)r^{n-1}\,dr\\
		&=\mu\int_1^\infty L(\psi)\phi_rr^{n-1}\,dr +\kappa\int_1^\infty\rho A(\phi)_r\phi_rr^{n-1}\,dr +\int_1^\infty h'(\tilde{\rho})\tilde{\rho}_r\phi\phi_r r^{n-1}\,dr.
		\end{aligned}
	\end{align}
	Since $r^{n-1}\rho_t =r^{n-1}\phi_t= -(r^{n-1}\rho \psi)_r$, we use integration by parts to obtain
	\begin{align}
		\begin{aligned}\label{est:cross-01}
		\int_1^\infty\rho\psi_t\phi_r r^{n-1}\,dr &= \frac{d}{dt}\int_1^\infty \rho\psi\phi_r r^{n-1}\,dr-\int_1^\infty \rho_t\psi\phi_r r^{n-1}\,dr -\int_1^\infty \rho\psi\phi_{tr}r^{n-1}\,dr\\
		&=\frac{d}{dt}\int_1^\infty \rho\psi\phi_r r^{n-1}\,dr -\int_1^\infty \rho \psi(\psi_r\phi_r+\psi\phi_{rr})r^{n-1}\,dr-\int_1^\infty |\phi_t|^2r^{n-1}\,dr.
		\end{aligned}
	\end{align}
	We substitute \eqref{est:cross-01} into \eqref{est:cross-00} to obtain
	\begin{align}
		\begin{aligned}\label{est:cross-1}
		\frac{d}{dt}&\int_1^\infty \rho\psi \phi_r r^{n-1}\,dr \\
		&=\int_1^\infty \rho\psi^2\phi_{rr}r^{n-1}\,dr+\int_1^\infty|\phi_t|^2r^{n-1}\,dr-\int_1^{\infty}(P(\rho)-P(\tilde{\rho}))_r\phi_r r^{n-1}\,dr\\
		&\quad +\mu\int_1^\infty L(\psi)\phi_r r^{n-1}\,dr +\kappa\int_1^\infty \rho A(\phi)_r \phi_r r^{n-1}\,dr+\int_1^\infty h'(\tilde{\rho})\tilde{\rho}_r \phi\phi_r r^{n-1}\,dr.
		\end{aligned}
	\end{align}
	We further estimate \eqref{est:cross-1} to extract dissipation terms. First of all, we observe that
	\begin{align}
		\begin{aligned}\label{est:P1}
		\int_1^\infty (P(\rho)-P(\tilde{\rho}))_r\phi_r r^{n-1}\,dr&=\int_1^\infty (P'(\rho)\rho_r \phi_r -P'(\tilde{\rho})\tilde{\rho}_r\phi_r)r^{n-1}\,dr \\
		&= \int_1^\infty P'(\rho)\phi_r^2r^{n-1}\,dr + \int_1^\infty (P'(\rho)-P'(\tilde{\rho}))\tilde{\rho}_r\phi_r r^{n-1}\,dr.
		\end{aligned}
	\end{align}
	Next, since $L(\psi) = G_r$, we use integration by parts to get 
	\begin{equation}\label{est:G1}
		\mu\int_1^\infty L(\psi) \phi_r r^{n-1}\,dr=-\mu\int_1^\infty G\phi_{rr}r^{n-1}\,dr -\mu(n-1)\int_1^\infty G\phi_r r^{n-2}\,dr. 
	\end{equation}
	Finally, we again use integration by parts and $(\phi_r r^{n-1})_r = A(\phi)r^{n-1}$ to get
	\begin{align}
		\begin{aligned}\label{est:A1}
		\kappa\int_1^\infty\rho A(\phi)_r\phi_r r^{n-1}\,dr&=-\kappa\int_1^\infty A(\phi)(\rho\phi_r r^{n-1})_r\,dr\\
		&=-\kappa\int_1^\infty A(\phi)\left(\rho_r \phi_r r^{n-1}+\rho A(\phi)r^{n-1}\right)\,dr\\
		&=-\kappa\int_1^\infty \rho A(\phi)^2 r^{n-1}\,dr -\kappa\int_1^\infty A(\phi)\rho_r \phi_r r^{n-1}\,dr.
		\end{aligned}
	\end{align}
	Combining the above estimates \eqref{est:P1}, \eqref{est:G1}, and \eqref{est:A1}, the equation \eqref{est:cross-1} can be written as
	\begin{align*}
		&\frac{d}{dt}\int_1^\infty\rho\psi\phi_r r^{n-1}\,dr +\int_1^\infty P'(\rho)\phi_r^2r^{n-1}\,dr+\kappa\int_1^\infty \rho A(\phi)^2r^{n-1}\,dr\\
		&=\int_1^\infty \rho\psi^2\phi_{rr}r^{n-1}\,dr+\|\phi_t\|^2-\int_1^\infty(P'(\rho)-P'(\tilde{\rho}))\tilde{\rho}_r\phi_r r^{n-1}\,dr-\mu\int_1^\infty G\phi_{rr}r^{n-1}\,dr\\
		&\quad -\mu(n-1)\int_1^\infty G\phi_r r^{n-2}\,dr -\kappa\int_1^\infty A(\phi)\rho_r\phi_r r^{n-1}\,dr +\int_1^\infty h'(\tilde{\rho})\tilde{\rho}_r\phi\phi_r r^{n-1}\,dr\\
		&=:\sum_{i=1}^7 I_i.
	\end{align*}
	Below, we estimate each $I_i$ on the right-hand side separately. \\
	
	\noindent $\bullet$ (Estimate of $I_1$): We use Lemma \ref{lem:Hardy} to observe $\|\psi\|_{L^\infty}\le C\|\psi_r\|$. Then, we use the boundedness of $\rho$, the {\it a priori} smallness assumption \eqref{smallness_assumption}, and $\|\psi_r\|^2\le \|G\|^2\le \mathcal{D}$ to get
	\[|I_1|\le C\|\psi\|_{L^\infty}\|\psi\|\|\phi_{rr}\|\le C\e\|\psi_r\|\|\phi_{rr}\|\le \tau \|\phi_{rr}\|^2 +C\e^2\mathcal{D},\]
	where $\tau$ is a small constant that will be chosen later.\\
	
	\noindent $\bullet$ (Estimate of $I_2$): It follows from \eqref{eq:perturbation}$_1$ that $\phi_t = -\rho G -\rho_r\psi$. Therefore, we again use the boundedness of $\rho$ and $\rho_r=\tilde{\rho}_r+\phi_r$ to get
	\begin{align*}
		I_2&=\|\phi_t\|^2\le C\|G\|^2+C\int_1^\infty (\tilde{\rho}_r+\phi_r)^2\psi^2 r^{n-1}\,dr\\
		&\le C\|G\|^2+C\int_1^\infty \tilde{\rho}_r^2\psi^2 r^{n-1}\,dr+C\int_1^\infty \phi_r^2\psi^2 r^{n-1}\,dr.
	\end{align*}
	Using $|\tilde{\rho}_r|\le C|\rho_b|e^{-\sigma r}\le C\delta e^{-\sigma r}$ and Lemma \ref{lem:exp-Hardy}, we get
	\[\int_1^\infty \tilde{\rho}_r^2 \psi^2 r^{n-1}\,dr \le C\delta^2 \int_1^\infty |\psi|^2 e^{-2\sigma r}r^{n-1}\,dr\le C\delta^2 \|\psi_r\|^2\le C\delta^2\mathcal{D}.\]
	We also use Lemma \ref{lem:Hardy} and smallness assumption \eqref{smallness_assumption} to derive
	\[\int_1^\infty \phi_r^2\psi^2 r^{n-1}\,dr\le \|\psi\|_{L^\infty}^2 \|\phi_r\|^2\le C\|\psi_r\|^2\|\phi_r\|^2\le CN^2\|\psi_r\|^2\le C\e^2\mathcal{D}. \]
	Thus, we estimate $I_2$ as
	\begin{equation}\label{est:phi_t}
		I_2\le C\|G\|^2+C(\delta^2+\e^2)\mathcal{D}.
	\end{equation}
	\noindent $\bullet$ (Estimate of $I_3$): For $I_3$, we again use Lemma \ref{lem:exp-Hardy} to derive
	\begin{align*}
		|I_3|&\le C|\rho_b|\int_1^\infty |\phi||\phi_r|e^{-\sigma r}r^{n-1}\,dr\\
		&\le C\delta\left(\int_1^\infty|\phi|^2e^{-2\sigma r}r^{n-1}\,dr\right)^{1/2}\|\phi_r\|\le C\delta\|\phi_r\|^2\le C\delta\mathcal{D}.
	\end{align*}
	\noindent $\bullet$ (Estimate of $I_4$): We simply estimate $I_4$ as 
	\[|I_4|\le\mu \|G\|\|\phi_{rr}\|\le\tau\|\phi_{rr}\|^2+ C\|G\|^2.\]
	
	\noindent $\bullet$ (Estimate of $I_5$): 
	For $I_5$, since $r\ge1$ and $n\ge3$, we have $r^{n-2}\le r^{n-1}$ and therefore
	\[|I_5|\le C\int_1^\infty |G||\phi_r|r^{n-1}\,dr\le C\|G\|\|\phi_r\|\le \tau\|\phi_r\|^2 +C\|G\|^2. \]
	
	\noindent $\bullet$ (Estimate of $I_6$):
	To estimate $I_6$, we first note that
	\[\|\tilde{\rho}_r\phi_r\|^2\le C|\rho_b|^2\int_1^\infty \phi_r^2 e^{-2\sigma r}r^{n-1}\,dr\le C\delta^2\|\phi_r\|^2.\]
	Using this, we estimate $I_6$ as
	\begin{align*}
		|I_6|&\le C\int_1^\infty |A(\phi)||\rho_r||\phi_r| r^{n-1}\,dr \le C\int_1^\infty |A(\phi)||\tilde{\rho}_r||\phi_r|r^{n-1}\,dr+C\int_1^\infty |A(\phi)||\phi_r|^2r^{n-1}\,dr\\
		&\le C\|A(\phi)\|\|\tilde{\rho}_r\phi_r\|+C\|A(\phi)\|\|\phi_r\|\|\phi_r\|_{L^\infty}\le \tau \|A(\phi)\|^2+C\|\tilde{\rho}_r\phi_r\|^2+ C\|\phi_r\|_{L^\infty}^2\|\phi_r\|^2\\
		&\le \tau\|A(\phi)\|^2+C\delta^2\|\phi_r\|^2+CN^2\|\phi_{rr}\|^2\le \tau\|A(\phi)\|^2+C(\delta^2+\e^2)\mathcal{D},
	\end{align*}
	where we apply Lemma \ref{lem:Hardy} to $\phi_r$ to get $\|\phi_r\|_{L^\infty}\le C\|\phi_{rr}\|$.\\
	
	\noindent $\bullet$ (Estimate of $I_7$): Finally, for $I_7$, we use the same argument as in the estimate of $I_3$ to get
	\[|I_7|\le C\delta\|\phi_r\|^2 \le C\delta\mathcal{D}.\]
	Combining all the estimates on $I_i$ for $i=1,2,\ldots, 7$, we get
	\begin{align*}
		\frac{d}{dt}&\int_1^\infty\rho\psi\phi_r r^{n-1}\,dr +\int_1^\infty P'(\rho)\phi_r^2r^{n-1}\,dr+\kappa\int_1^\infty \rho A(\phi)^2r^{n-1}\,dr\\
			&\le \tau \|A(\phi)\|^2+ \tau\|\phi_r\|^2  + 2\tau\|\phi_{rr}\|^2 +C\|G\|^2 +C(\delta+\e^2)\mathcal{D}.
	\end{align*}
	On the other hand, since $A(\phi)=\phi_{rr}+\frac{n-1}{r}\phi_r$ and $\phi_r(1)=0$, we use Lemma \ref{lem:separate} to get
	\begin{align*}
		\int_1^\infty A(\phi)^2 r^{n-1}\,dr=\|\phi_{rr}\|^2+(n-1)\|\phi_r\|^2_{L^2_{r^{n-3}}}.
	\end{align*}
	Since $\rho$ is bounded below by $\rho_m>0$, the dissipation terms are bounded below as
	\begin{align*}
		&\int_1^\infty P'(\rho)\phi_r^2 r^{n-1}\,dr \ge P'(\rho_m)\|\phi_r\|^2,\\
		&\kappa\int_1^\infty \rho A(\phi)^2r^{n-1}\,dr\ge \kappa\rho_m\int_1^\infty A(\phi)^2r^{n-1}\,dr= \kappa\rho_m \left(\|\phi_{rr}\|^2+(n-1)\|\phi_r\|_{L^2_{r^{n-3}}}^2\right).
	\end{align*} 
	Therefore, choosing $\tau$ sufficiently small so that 
	\[\tau<\min\left\{\frac{\kappa\rho_m}{8},\frac{P'(\rho_m)}{2}\right\},\]
	we obtain
	\[\frac{d}{dt}\int_1^\infty \rho\psi\phi_rr^{n-1}\,dr+\frac{P'(\rho_m)}{2}\|\phi_r\|^2+\frac{\kappa\rho_m}{2}\|\phi_{rr}\|^2\le C\|G\|^2 +C(\delta+\e^2)\mathcal{D}.\]
	Thus, taking $c_2=\min\left\{\frac{P'(\rho_m)}{2},\frac{\kappa\rho_m}{2}\right\}$, we get the desired estimate.
\end{proof}

\subsection{High-order estimate}

Next, we conduct estimates on the higher derivatives of $\phi$ and $\psi$.

\begin{lemma}\label{lem:H1}
	Under the same assumptions as in Proposition \ref{prop:apriori}, there exists a positive constant $C_3$ such that
	\[\frac{d}{dt}\int_1^\infty \left(\frac{\rho}{2}|\psi_r|^2+\frac{\kappa}{2}|\phi_{rr}|^2\right)r^{n-1}\,dr +\frac{3\mu}{4}\|G_r\|^2\le C_3(\|\phi_{rr}\|^2+\|\phi_r\|^2+\|\psi_r\|^2)+C(\delta+\e)\mathcal{D}.\]
\end{lemma}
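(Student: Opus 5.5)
Multiply \eqref{eq:perturbation}$_2$ by $-L(\psi)r^{n-1}=-G_r r^{n-1}$ and integrate over $(1,\infty)$. The viscous term then gives exactly $-\mu\|G_r\|^2$ on the right-hand side, which is the dissipation we want.

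\emph{Time-derivative term.} Write $-\int_1^\infty \rho\psi_t G_r r^{n-1}\,dr=\int_1^\infty G(\rho\psi_t r^{n-1})_r\,dr$; the boundary term vanishes since $\psi_t(t,1)=0$. Expanding $G=\psi_r+\frac{n-1}{r}\psi$ gives
\[\int_1^\infty \rho\psi_r\psi_{tr}r^{n-1}\,dr=\frac{d}{dt}\int_1^\infty\frac{\rho}{2}\psi_r^2r^{n-1}\,dr-\frac12\int_1^\infty\rho_t\psi_r^2r^{n-1}\,dr,\]
plus lower-order pieces. The pieces $\int\rho_r\psi_tGr^{n-1}$ and $(n-1)\int\rho\psi\psi_{tr}r^{n-2}$ are handled by a further integration by parts, using $\psi(1)=0$ and $\psi_t(1)=0$, to turn them into terms involving $\psi_t\psi$ with weight $r^{n-3}$ and $\rho_r\psi\psi_t$. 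These are bounded using Lemma~\ref{lem:Hardy}, the decay of $\tilde\rho_r$ together with Lemma~\ref{lem:exp-Hardy}, and $\|\psi\|_{L^2_{r^{n-3}}}\le C\|\psi_r\|$.

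Rather than leaving $\|\psi_t\|^2$ uncontrolled, I plan to substitute $\psi_t$ from the equation whenever it appears with a non-small coefficient. An alternative that is probably cleaner is to pair against $-\frac1\rho L(\psi)$ after dividing the momentum equation by $\rho$. The lower-order term $\frac{n-1}{r}\psi\psi_{tr}$ is then $\frac{d}{dt}$ of $\frac{n-1}{2}\psi^2r^{n-3}$ after integration by parts, which is harmless. This produces $\frac{d}{dt}\int\frac{\rho}{2}\psi_r^2r^{n-1}$ up to a controllable piece $\int\rho\frac{n-1}{2}\psi^2r^{n-3}$-type, which can be absorbed using the $\|G\|^2$ dissipation from Lemma~\ref{lem:L2} or written as $C\|\psi_r\|^2$.

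\emph{Capillary term.} The key cancellation is
\[-\kappa\int_1^\infty\rho A(\phi)_rG_r r^{n-1}\,dr.\]
From \eqref{eq:perturbation}$_1$ we have $\rho G=-\phi_t-\rho_r\psi$, so $G_r=-(\phi_t+\rho_r\psi)_r/\rho+\rho_r(\phi_t+\rho_r\psi)/\rho^2$. The leading part is $\kappa\int A(\phi)_r\phi_{tr}r^{n-1}$. Integrating by parts with $\phi_{tr}(1)=0$ and using $(r^{n-1}\phi_{tr})_r=A(\phi_t)r^{n-1}$, this becomes $-\kappa\int A(\phi)A(\phi)_t r^{n-1}=-\frac{\kappa}{2}\frac{d}{dt}\|A(\phi)\|^2$. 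By Lemma~\ref{lem:separate}, $\|A(\phi)\|^2=\|\phi_{rr}\|^2+(n-1)\|\phi_r\|^2_{L^2_{r^{n-3}}}$. The extra term $\frac{d}{dt}\|\phi_r\|^2_{L^2_{r^{n-3}}}$ is moved into the right side via $2\int\phi_r\phi_{tr}r^{n-3}$ and bounded using $\phi_t=-\rho G-\rho_r\psi$, giving $C(\|\phi_r\|^2+\|G_r\|\|\phi_r\|+\dots)$. Alternatively, that quantity can simply be kept in the functional. The remaining terms carry $\rho_r=\tilde\rho_r+\phi_r$ or $\psi$ as a factor and are bounded by $\tau\|G_r\|^2+C\|\phi_{rr}\|^2+C(\delta+\e)\mathcal D$, using $\|\phi_r\|_{L^\infty}\le C\|\phi_{rr}\|$ and $\|\psi\|_{L^\infty}\le C\|\psi_r\|$.

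\emph{Remaining terms.} The pressure term $(P(\rho)-P(\tilde\rho))_rG_r$, the term $h'(\tilde\rho)\tilde\rho_r\phi G_r$, and the convection term $\rho\psi\psi_rG_r$ are handled by Cauchy--Schwarz:
\[\le \frac{\mu}{8}\|G_r\|^2+C\|\phi_r\|^2+C\delta\|\phi_r\|^2+C\e\mathcal D.\]
Absorbing all the $\tau\|G_r\|^2$ contributions into the $\mu\|G_r\|^2$ dissipation leaves $\frac{3\mu}{4}\|G_r\|^2$.

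\emph{Main obstacle.} I expect the hardest step to be the exact bookkeeping of the capillary cross term $\rho A(\phi)_rG_r$. Producing the clean time derivative $\frac{\kappa}{2}\frac{d}{dt}\|\phi_{rr}\|^2$ requires using the continuity equation precisely, and the weighted boundary and Hardy-type remainders must be controlled without generating third-order $\phi$ terms outside $\mathcal D$. Terms containing $\phi_{rrr}$ must pair only with small factors, namely $\tilde\rho_r$ (of size $\delta$) or $\psi,\phi_r$ (of size $\e$), and they are bounded using $\|A(\phi)_r\|\lesssim\|G_r\|+\dots$ from the momentum equation if needed.
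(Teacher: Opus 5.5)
Your route differs from the paper's and works after one repair. The paper differentiates \eqref{eq:perturbation}$_2$ in $r$ and tests with $\psi_r r^{n-1}$. That amounts to testing the equation with $-A(\psi)r^{n-1}=-(G_r+\frac{n-1}{r^2}\psi)r^{n-1}$ after splitting off boundary contributions, which is why the paper must handle the boundary term $B$ (by evaluating the equation at $r=1$) and the extra terms $J_6$, $J_{11}$. Testing with $-G_r r^{n-1}$ has three advantages:
\begin{itemize}
\item every integration by parts in your argument has a vanishing boundary term ($\psi_t(t,1)=\phi_{tr}(t,1)=\psi(t,1)=0$);
\item the capillary term yields $-\frac{\kappa}{2}\frac{d}{dt}\|A(\phi)\|^2$ in one step;
\item the leftover pieces $\kappa\langle A(\phi)_r,\rho_rG+\rho_{rr}\psi+\rho_r\psi_r\rangle$ are $C(\delta+\e)\mathcal{D}$, because $\|A(\phi)_r\|\le C\sqrt{\mathcal{D}}$ (as in the paper's estimate of $K_4$).
\end{itemize}
If you keep the natural functional, you get
\[
\frac{d}{dt}\Big(\frac12\langle\rho,G^2\rangle+\frac{\kappa}{2}\|A(\phi)\|^2\Big)+\frac{7\mu}{8}\|G_r\|^2\le C\|\phi_r\|^2+C(\delta+\e)\mathcal{D}.
\]
This functional is bounded above by $CN^2$ and below by $c(\|\psi_r\|^2+\|\phi_{rr}\|^2)$, so it could replace $\mathcal{E}_3$ in the proof of Proposition \ref{prop:apriori} essentially verbatim. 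The cost appears only when you convert to the lemma's exact functional. The capillary half of that conversion is the paper's $J_7$, and you treat it correctly.

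The kinetic half is where your sketch does not close as written. Expanding $-\langle\rho\psi_t,G_r\rangle$ leaves $(n-1)\int_1^\infty\rho\psi\psi_t r^{n-3}\,dr$ with an $O(1)$ coefficient. Lemma \ref{lem:Hardy} bounds it only by $C\|\psi_r\|\|\psi_t\|$, which the lemma does not permit, since $\|\psi_t\|^2$ may enter only through $C(\delta+\e)\mathcal{D}$.

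Your ``cleaner alternative'' is not a fix. The quantity $\frac{d}{dt}\frac{n-1}{2}\int\rho\psi^2 r^{n-3}\,dr$ is a time derivative, so it can neither be absorbed by $\|G\|^2$ nor written as $C\|\psi_r\|^2$; expanding it returns exactly the offending term. Keeping it inside the functional is legitimate, but then you prove a different inequality.

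Substituting $\psi_t$ from the momentum equation is the right move. However, the piece $\kappa(n-1)\int_1^\infty\rho A(\phi)_r\psi r^{n-3}\,dr$ that it generates cannot be handled by your rule for third-order terms. Here $\psi$ supplies no smallness: the best bound is $C\|A(\phi)_r\|\|\psi_r\|$, of full size $\mathcal{D}$. Using $\|A(\phi)_r\|\lesssim\|\psi_t\|+\|G_r\|+\cdots$ only brings back $\|\psi_t\|\|\psi_r\|$ with an $O(1)$ constant, which is circular. You must instead integrate by parts, using $\psi(1)=0$:
\[
\kappa(n-1)\int_1^\infty\rho A(\phi)_r\psi r^{n-3}\,dr=-\kappa(n-1)\int_1^\infty A(\phi)\,\partial_r(\rho\psi r^{n-3})\,dr\le C(\|\phi_{rr}\|^2+\|\psi_r\|^2)+C(\delta+\e)\mathcal{D},
\]
by \eqref{est:Aphi} and Lemma \ref{lem:Hardy}. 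The substitution also produces $\mu(n-1)\int_1^\infty G_r\psi r^{n-3}\,dr\le\frac{\mu}{32}\|G_r\|^2+C\|\psi_r\|^2$. These two terms are precisely the paper's $J_{11}$ and $J_6$, i.e.\ the contribution of the $\frac{n-1}{r^2}\psi$ by which the two test functions differ. With them, your proof of the lemma as stated is complete.
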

\begin{proof}
	We differentiate \eqref{eq:perturbation}$_2$ with respect to $r$, multiply by $\psi_r r^{n-1}$ and then integrate over $(1,\infty)$ to get
	\begin{align*}
		&\underbrace{\<(\rho\psi_t)_r,\psi_r\>+\<(\rho\psi\psi_r)_r,\psi_r\>+\<(P-\tilde{P})_{rr},\psi_r\>}_{=:L_1+L_2+L_3}\\
		&\quad=\underbrace{\mu\<G_{rr},\psi_r\>+\kappa\<(\rho A(\phi)_r)_r,\psi_r\>+\<(h'(\tilde{\rho})\tilde{\rho}_r\phi)_r,\psi_r\>}_{=:R_1+R_2+R_3}.
	\end{align*}
	First, we observe that $L_1$ and $L_2$ can be modified as
	\begin{align*}
		L_1 &= \<\rho_r\psi_t,\psi_r\>+\<\rho\psi_{tr},\psi_r\>=\<\rho_r\psi_t,\psi_r\>+\frac{1}{2}\frac{d}{dt}\|\sqrt{\rho}\psi_r\|^2-\frac{1}{2}\<\rho_t,\psi_r^2\>\\
		&=\<\rho_r\psi_t,\psi_r\>+\frac{1}{2}\frac{d}{dt}\|\sqrt{\rho}\psi_r\|^2-\<\rho\psi\psi_r,\psi_{rr}\>,
	\end{align*}
	and
	\[L_2=\<\rho_r\psi,\psi_r^2\>+\<\rho,\psi_r^3\>+\<\rho\psi\psi_r,\psi_{rr}\>.\]
	Therefore, we get
	\[L_1+L_2=\frac{1}{2}\frac{d}{dt}\|\sqrt{\rho}\psi_r\|^2+\<\rho_r\psi_t,\psi_r\>+\<\rho_r\psi,\psi_r^2\>+\<\rho,\psi_r^3\>.\]
	For $L_3$, we take integration by parts to get
	\[L_3 = -(P-\tilde{P})_r(1)\psi_r(1)-\<(P-\tilde{P})_r,\psi_{rr}\>-(n-1)\int_1^\infty (P-\tilde{P})_r\psi_r r^{n-2}\,dr.\]
	Next, we focus on the right-hand side terms. First, using integration by parts, we have
	\begin{align*}
		R_1 = -\mu G_r(1)\psi_r(1)-\mu\<G_r,\psi_{rr}\>-\mu(n-1)\int_1^\infty G_r\psi_r r^{n-2}\,dr.
	\end{align*}
	It follows from the definition of $G$ that  $\psi_{rr}=G_r-\frac{n-1}{r}\psi_r+\frac{n-1}{r^2}\psi$, and we substitute it to get
	\[-\mu\<G_r,\psi_{rr}\>=-\mu \|G_r\|^2+\mu(n-1)\int_1^\infty G_r\psi_rr^{n-2}\,dr -\mu(n-1)\int_1^\infty G_r\psi r^{n-3}\,dr.\]
	Hence, $R_1$ can be written as
	\[R_1 = -\mu G_r(1)\psi_r(1)-\mu\|G_r\|^2-\mu(n-1)\int_1^\infty G_r\psi r^{n-3}\,dr.\]
	Next, we consider $R_2$. Again, we use integration by parts to observe
	\begin{align*}
		R_2=-\kappa\rho(1) A(\phi)_r(1)\psi_r(1)-\kappa\<\rho A(\phi)_r,\psi_{rr}\>-\kappa(n-1)\int_1^\infty \rho A(\phi)_r \psi_r r^{n-2}\,dr.
	\end{align*}
	Again, using the definition of $G$, we get $\rho\psi_{rr} = \rho G_r-\frac{n-1}{r}\rho\psi_r +\frac{n-1}{r^2}\rho\psi$. On the other hand, by differentiating \eqref{eq:perturbation}$_1$ with respect to $r$, we get $\rho G_r=-\phi_{tr}-\rho_r G-\rho_{rr}\psi-\rho_r\psi_r$. These equations imply the following identity:
	\[\rho\psi_{rr} = -\phi_{tr}-\rho_rG-\rho_{rr}\psi-\rho_r\psi_r-\frac{n-1}{r}\rho\psi_r+\frac{n-1}{r^2}\rho\psi.\]
	Hence, we reformulate the second term of $R_2$ as
	\begin{align*}
		\<\rho A(\phi)_r,\psi_{rr}\>&=-\< A(\phi)_r,\phi_{tr}\>-\<A(\phi)_r,\rho_r G\>-\<A(\phi)_r,\rho_{rr}\psi\>-\<A(\phi)_r,\rho_r\psi_r\>\\
		&\quad -(n-1)\int_1^\infty \rho A(\phi)_r\psi_r r^{n-2}\,dr + (n-1)\int_1^\infty \rho A(\phi)_r\psi r^{n-3}\,dr,
	\end{align*}
	and therefore,
	\begin{align*}
		R_2&=-\kappa\rho(1)A(\phi)_r(1)\psi_r(1)+\kappa\<A(\phi)_r,\phi_{tr}\>+\kappa\<A(\phi)_r,\rho_rG\>+\kappa\<A(\phi)_r,\rho_{rr}\psi\>\\
		&\quad+\kappa\<A(\phi)_r,\rho_r\psi_r\> -\kappa(n-1)\int_1^\infty \rho A(\phi)_r\psi r^{n-3}\,dr.
	\end{align*}
	Using $A(\phi)=\phi_{rr}+\frac{n-1}{r}\phi_r$, the term $\kappa\<A(\phi)_r,\phi_{tr}\>$ in $R_2$ can be represented as
	\begin{align*}
	\kappa\<A(\phi)_r,\phi_{tr}\> &= -\kappa\<A(\phi),\phi_{trr}\>-\kappa(n-1)\int_1^\infty \left(\phi_{rr}+\frac{n-1}{r}\phi_r\right)\phi_{tr}r^{n-2}\,dr\\
	&=-\kappa\<\phi_{rr},\phi_{trr}\> -\kappa(n-1)\int_1^\infty\phi_{r}\phi_{trr}r^{n-2}\,dr\\
	&\quad -\kappa(n-1)\int_1^\infty \left(\phi_{rr}+\frac{n-1}{r}\phi_r\right)\phi_{tr}r^{n-2}\,dr\\
	&= -\frac{\kappa}{2}\frac{d}{dt}\|\phi_{rr}\|^2-\kappa(n-1)\int_1^\infty \phi_r\phi_{tr}r^{n-3}\,dr.
	\end{align*}
	From the estimates on $L_i$ and $R_i$, we get 
	\begin{align*}
		\frac{1}{2}\frac{d}{dt}&(\|\sqrt{\rho}\psi_r\|^2+\kappa\|\phi_{rr}\|^2) +\mu\|G_r\|^2\\
		&=\underbrace{-\mu G_r(1)\psi_r(1)-\kappa\rho(1)A(\phi)_r(1)\psi_r(1)+(P-\tilde{P})_r(1)\psi_r(1)}_{=:B}\\
		&\quad -\<\rho_r\psi_t,\psi_r\>-\<\rho_r\psi,\psi_r^2\>-\<\rho,\psi_r^3\>+\<(P-\tilde{P})_{r},\psi_{rr}\>+(n-1)\int_1^\infty (P-\tilde{P})_r\psi_rr^{n-2}\,dr\\
		&\quad -\mu(n-1)\int_1^\infty G_r\psi r^{n-3}\,dr-\kappa(n-1)\int_1^\infty \phi_r\phi_{tr}r^{n-3}\,dr+\kappa\<A(\phi)_r,\rho_rG\>+\kappa\<A(\phi)_r,\rho_{rr}\psi\>\\
		&\quad +\kappa\<A(\phi)_r,\rho_r\psi_r\> -\kappa(n-1)\int_1^\infty \rho A(\phi)_r\psi r^{n-3}\,dr+\<(h'(\tilde{\rho})\tilde{\rho}_r\phi)_r,\psi_r\>\\
		&=:B+\sum_{i=1}^{12}J_{i},
	\end{align*}
	where $B$ represents the terms for the boundary.\\
	
	\noindent $\bullet$ (Estimate of $B$): Evaluating \eqref{eq:perturbation}$_2$ at $r=1$ and using $\psi(1)=\psi_t(1)=0$, we get
	\[0=\mu G_r(1)+\kappa\rho(1)A(\phi)_r(1)+h'(\tilde{\rho}(1))\tilde{\rho}_r(1)\phi(1)-(P-\tilde{P})_r(1).\]
	Therefore, $B$ can be simplified as
	\[B=\psi_r(1)\left(-\mu G_r(1)-\kappa\rho(1)A(\phi)_r(1)+(P-\tilde{P})_r(1)\right)=\psi_r(1)h'(\tilde{\rho}(1))\tilde{\rho}_r(1)\phi(1).\]
	We now use Lemma \ref{lem:Hardy} to estimate $B$ as  
	\begin{align*}
		|B|&=|\psi_r(1)h'(\tilde{\rho}(1))\tilde{\rho}_r(1)\phi(1)|\le C|\rho_b||\phi(1)||\psi_r(1)|\le C\delta\|\phi_r\|\|\psi_{rr}\|.
	\end{align*}
	On the other hand, since $\psi(1)=0$, we use $\psi_{rr} = G_r-\frac{n-1}{r}\psi_r+\frac{n-1}{r^2}\psi$, $r\ge1$, and Lemma \ref{lem:Hardy} to get
	\begin{align}
		\begin{aligned}\label{psi_rr}
		\|\psi_{rr}\|^2&\le C\|G_r\|^2+C\int_1^\infty \psi_r^2 r^{n-3}\,dr + C\int_1^\infty \psi^2 r^{n-5}\,dr\\
		&\le C\|G_r\|^2+C\|\psi_r\|^2 + C\|\psi\|_{L^2_{r^{n-3}}}^2\le C\|G_r\|^2+C\|\psi_r\|^2.
		\end{aligned}
	\end{align}
	Therefore, we further estimate $B$ as
	\[B\le C\delta\|\phi_r\|(\|G_r\|+\|\psi_r\|)\le \frac{\mu}{8}\|G_r\|^2 +C\delta\mathcal{D},\]
	since $\|\psi_r\|^2 \le \|G\|^2\le \mathcal{D}$.\\
	
	\noindent $\bullet$ (Estimate of $J_1$ and $J_{10}$): Using \eqref{eq:perturbation}$_2$, $J_{10}$ can be expanded as
	\begin{align*}
		J_{10} &= \kappa\<\rho_r A(\phi)_r,\psi_r\>=\int_1^\infty \frac{\rho_r}{\rho}\left(\rho\psi_t -\mu G_r +\rho\psi\psi_r +(P-\tilde{P})_r-h'(\tilde{\rho})\tilde{\rho}_r\phi\right)\psi_r r^{n-1}\,dr\\
		&=-J_1 -\mu\int_1^\infty \frac{\rho_r}{\rho}G_r\psi_r r^{n-1}+\<\rho_r\psi,\psi_r^2\>+\int_1^\infty \frac{\rho_r}{\rho}(P-\tilde{P})_r\psi_r r^{n-1}\,dr\\
		&\quad -\int_1^\infty \frac{\rho_r}{\rho} h'(\tilde{\rho})\tilde{\rho}_r\phi\psi_r r^{n-1}\,dr\\
		&\le -J_1 +C\|\rho_r\|_{L^\infty}\|G_r\|\|\psi_r\|+\|\rho_r\|_{L^\infty}\|\psi\|_{L^\infty}\|\psi_r\|^2+C\|\rho_r\|_{L^\infty}\|\phi_r\|\|\psi_r\|\\
		&\quad + C\delta\|\rho_r\|_{L^\infty}\|\phi e^{-\sigma r}\|\|\psi_r\|\\
		&\le -J_1 + C(\delta+\e)\mathcal{D},
	\end{align*}
	where we used $\|\rho_r\|_{L^\infty}\le \|\tilde{\rho}_r\|_{L^\infty}+\|\phi_r\|_{L^\infty}\le C(\delta+\e)$ and $\|\phi e^{-\sigma r}\|\le C\|\phi_r\|$, thanks to Lemma \ref{lem:exp-Hardy}. Thus, we have
	\[J_1+J_{10}\le C(\delta+\e)\mathcal{D}.\]
	\noindent $\bullet$ (Estimate of $J_2$): We again use $\|\rho_r\|_{L^\infty}\le \|\phi_r\|_{L^\infty}+\|\tilde{\rho}_r\|_{L^\infty}\le C(\delta+\e)$ and $\|\psi\|_{L^\infty}<N<\e$ to estimate $J_2$ as
	\[J_2\le \|\rho_r\|_{L^\infty}\|\psi\|_{L^\infty}\|\psi_r\|^2\le C(\delta+\e)\e\mathcal{D}.\]
	\noindent $\bullet$ (Estimate of $J_3$): We use Lemma \ref{lem:Hardy} and \eqref{psi_rr} to get $\|\psi_r\|_{L^\infty}\le C\|\psi_{rr}\|\le C(\|G_r\|+\|\psi_r\|)$. This yields
	\begin{align*}
		J_3&\le C\|\psi_r\|_{L^\infty}\|\psi_r\|^2\le C(\|G_r\|+\|\psi_r\|)\|\psi_r\|^2\\
		&\le \frac{\mu}{32}\|G_r\|^2 +C\|\psi_r\|^{4}+C\|\psi_r\|^3\le \frac{\mu}{32}\|G_r\|^2+C\e\mathcal{D}.
	\end{align*}
	\noindent $\bullet$ (Estimate of $J_4$): Again, it follows from \eqref{psi_rr} that
	\begin{align*}
		J_4&\le C\|\phi_r\|\|\psi_{rr}\|\le C\|\phi_r\|\|G_r\|+C\|\phi_r\|\|\psi_r\|\le \frac{\mu}{32}\|G_r\|^2+C\|\phi_r\|^2+C\|\psi_r\|^2.
	\end{align*}
	\noindent $\bullet$ (Estimate of $J_5$): Since $r\ge 1$, we simply bound $J_5$ as
	\[J_5\le C(\|\phi_r\|^2+\|\psi_r\|^2).\]
	\noindent $\bullet$ (Estimate of $J_6$): As $\psi(1)=0$, we use Lemma \ref{lem:Hardy} to get
	\[J_6\le C\|G_r\|\|\psi\|_{L^2_{r^{n-3}}}\le C\|G_r\|\|\psi_r\|\le \frac{\mu}{32}\|G_r\|^2+C\|\psi_r\|^2.\]
	\noindent $\bullet$ (Estimate of $J_7$): Using $\phi_{tr} = -(\rho_r G+\rho G_r+\rho_{rr}\psi+\rho_r\psi_r)$, we split $J_7$ as
	\begin{align*}
		J_7 &=\kappa(n-1)\int_1^\infty \phi_r(\rho_rG+\rho G_r+\rho_{rr}\psi+\rho_r\psi_r)r^{n-3}\,dr=:\sum_{i=1}^4J_{7i}.
	\end{align*}
	We again use $\|\rho_r\|_{L^\infty}<C(\delta+\e)$ and $r\ge 1$ to get
	\[J_{71}\le C\|\rho_r\|_{L^\infty}\|\phi_r\|\|G\|\le C(\delta+\e)\|\phi_{r}\|\|G\|\le C(\delta+\e)\mathcal{D}.\]
	Again, we use $r\ge 1$ and Young's inequality to get
	\begin{align*}
		J_{72}\le C\|\phi_r\|\|G_r\|\le \frac{\mu}{32}\|G_r\|^2+C\|\phi_{r}\|^2.
	\end{align*}
	For $J_{73}$, we split $\rho_{rr}=\tilde{\rho}_{rr}+\phi_{rr}$ and use Lemma \ref{lem:Hardy} and Lemma \ref{lem:exp-Hardy} to estimate it as
	\begin{align*}
		J_{73} &\le \int_1^\infty|\phi_r||\tilde{\rho}_{rr}||\psi| r^{n-3}\,dr+\int_1^\infty |\phi_r||\phi_{rr}||\psi|r^{n-3}\,dr\\
		& \le C|\rho_b|\|\phi_r\|\|\psi e^{-\sigma r}\|+\|\phi_r\|_{L^\infty}\|\phi_{rr}\|\|\psi\|_{L^2_{r^{n-3}}}\le C\delta\|\phi_r\|\|\psi_r\|+C\e\|\phi_{rr}\|\|\psi_r\|\\
		&\le C(\delta+\e)\mathcal{D}.
	\end{align*}
	Finally, it is straightforward to get
	\[J_{74}\le C\|\rho_r\|_{L^\infty}\|\phi_r\|\|\psi_r\|\le C(\delta+\e)\|\phi_{r}\|\|\psi_r\|\le C(\delta+\e)\mathcal{D}.\]
	Combining the estimates on $J_{7i}$, we get
	\[J_7\le \frac{\mu}{32}\|G_r\|^2+C\|\phi_{r}\|^2+C(\delta+\e)\mathcal{D}.\]
	\noindent $\bullet$ (Estimate of $J_8$): To estimate $J_8$, we use \eqref{eq:perturbation}	to get
	\[J_8 = \int_1^\infty \rho_rG\left(\psi_t+\psi\psi_r -\frac{\mu}{\rho}G_r + \frac{1}{\rho}(P-\tilde{P})_r -\frac{1}{\rho}h'(\tilde{\rho})\tilde{\rho}_r\phi\right)r^{n-1}\,dr=\sum_{i=1}^5J_{8i}.\]
	We again use $\|\rho_r\|_{L^\infty}\le C(\delta+\e)$ to bound each term $J_{8i}$ as
	\begin{align*}
		J_{81}&\le C(\delta+\e)\|G\|\|\psi_t\|\le C(\delta+\e)\mathcal{D},\\
		J_{82}&\le C(\delta+\e)\|\psi\|_{L^\infty}\|G\|\|\psi_r\|\le C(\delta+\e)\e\mathcal{D},\\
		J_{83}&\le C(\delta+\e)\|G\|\|G_r\|\le C(\delta+\e)\mathcal{D},\\
		J_{84}&\le C(\delta+\e)\|G\|\|\phi_r\|\le C(\delta+\e)\mathcal{D},\\
		J_{85}&\le C(\delta+\e)\delta\|\phi e^{-\sigma r}\|\|G\|\le C(\delta+\e)\delta\|\phi_r\|\|G\|\le C(\delta+\e)\delta\mathcal{D}.
	\end{align*}
	Combining the above estimates, we get $J_8\le C(\delta+\e)\mathcal{D}$.\\
	
	\noindent $\bullet$ (Estimate of $J_9$): Similar to the estimate of $J_8$, we use \eqref{eq:perturbation}$_2$ to get
	\[J_9 = \int_1^\infty\rho_{rr}\psi\left(\psi_t+\psi\psi_r -\frac{\mu}{\rho}G_r + \frac{1}{\rho}(P-\tilde{P})_r -\frac{1}{\rho}h'(\tilde{\rho})\tilde{\rho}_r\phi\right)r^{n-1}\,dr=\sum_{i=1}^5 J_{9i}.\]
	Using $|\rho_{rr}|\le |\tilde{\rho}_{rr}|+|\phi_{rr}|\le C|\rho_b|e^{-\sigma r}+|\phi_{rr}|$ and Lemma \ref{lem:exp-Hardy}, we get 
	\begin{align*}
		J_{91}&\le C|\rho_b|\|\psi e^{-\sigma r}\|\|\psi_t\|+C\|\psi\|_{L^\infty}\|\phi_{rr}\|\|\psi_t\|\le C(\delta+\e)\mathcal{D},\\
		J_{92}&\le C|\rho_b|\|\psi\|_{L^\infty}\|\psi e^{-\sigma r}\|\|\psi_r\|+\|\psi\|_{L^\infty}^2\|\phi_{rr}\|\|\psi_r\|\le C(\delta+\e)\e\mathcal{D},\\
		J_{93}&\le C|\rho_b|\|G_r\|\|\psi e^{-\sigma r}\|+C\|\psi\|_{L^\infty}\|\phi_{rr}\|\|G_r\|\le C(\delta+\e)\mathcal{D},\\
		J_{94}&\le C|\rho_b|\|\phi_r\|\|\psi e^{-\sigma r}\|+C\|\psi\|_{L^\infty}\|\phi_{rr}\|\|\phi_r\|\le C(\delta+\e)\mathcal{D},\\
		J_{95}&\le C|\rho_b|^2\|\phi e^{-\sigma r}\|\|\psi e^{-\sigma r}\|+ C|\rho_b|\|\phi\|_{L^\infty}\|\phi_{rr}\|\|\psi e^{-\sigma r}\|\le C(\delta+\e)\delta\mathcal{D}.
	\end{align*}
	Hence, we get $J_{9}\le C(\delta+\e)\mathcal{D}$.\\
	
	\noindent $\bullet$ (Estimate of $J_{11})$:	Taking integration by parts, we split $J_{11}$ as
	\begin{align*}
		J_{11}& = \kappa(n-1)\int_1^\infty A(\phi)\pa_r(\rho \psi r^{n-3})\,dr \\
		&=\kappa(n-1)\int_1^\infty A(\phi)\rho_r \psi r^{n-3}\,dr +\kappa(n-1)\int_1^\infty A(\phi)\rho \psi_r r^{n-3}\,dr\\
		&\quad +\kappa(n-1)(n-3)\int_1^\infty A(\phi)\rho\psi r^{n-4}\,dr\\
		&=:J_{11,1}+J_{11,2}+J_{11,3}.
	\end{align*}
	We use $r\ge1$ and Lemma \ref{lem:Hardy} to estimate $J_{11,1}$ as
	\[J_{11,1}\le C\|\rho_r\|_{L^\infty}\|A(\phi)\|\|\psi\|_{L^2_{r^{n-3}}}\le C(\delta+\e)\|A(\phi)\|\|\psi_r\|\le C(\delta+\e)\mathcal{D},\]
	where we used $\phi_r(1)=0$ and Lemma \ref{lem:Hardy} to get
	\begin{equation}\label{est:Aphi}
		\|A(\phi)\|^2 = \|\phi_{rr}\|^2+(n-1)\|\phi_r\|_{L^2_{r^{n-3}}}^2\le C\|\phi_{rr}\|^2.
	\end{equation}
	Similarly, we use $r\ge 1$ to obtain 
	\[J_{11,2}\le C\|A(\phi)\|\|\psi_r\|\le C(\|\phi_{rr}\|^2+\|\psi_r\|^2).\]
	Finally, we use $r\ge 1$ and Lemma \ref{lem:Hardy} to estimate $J_{11,3}$ as 
	\[J_{11,3}\le C\|A(\phi)\|\|\psi\|_{L^2_{r^{n-3}}}\le C\|A(\phi)\|\|\psi_r\|\le C(\|\phi_{rr}\|^2+\|\psi_r\|^2).\]
	Thus, we derive $J_{11}\le C(\delta+\e)\mathcal{D}+C(\|\phi_{rr}\|^2+\|\psi_r\|^2)$. 
	
	\noindent $\bullet$ (Estimate of $J_{12}$): Finally, using Lemma \ref{lem:exp-Hardy}, it is easy to observe that
	\begin{align*}
		J_{12}&\le C|\rho_b|^2\int_1^\infty e^{-2\sigma r}|\phi||\psi_r|r^{n-1}\,dr + C|\rho_b|\int_1^\infty e^{-\sigma r}|\phi||\psi_r|r^{n-1}\,dr \\
		&\quad +C|\rho_b|\int_1^\infty e^{-\sigma r}|\phi_r||\psi_r|r^{n-1}\,dr\\
		&\le C\delta^2\|\phi e^{-\sigma r}\|\|\psi_r\|+C\delta \|\phi e^{-\sigma r}\|\|\psi_r\|+C\delta \|\phi_r\|\|\psi_r\|\le C\delta\mathcal{D}.
	\end{align*}
	
	Now, we gather all the estimates on $B$ and $J_{i}$ for $i=1,2,\ldots, 12$ to get
	\begin{align*}
	\frac{1}{2}\frac{d}{dt}(\|\sqrt{\rho}\psi_r\|^2+\kappa\|\phi_{rr}\|^2)+\frac{3\mu}{4}\|G_r\|^2\le C(\|\phi_{r}\|^2+\|\phi_{rr}\|^2+\|\psi_r\|^2)+C(\delta+\e)\mathcal{D},
	\end{align*}
	which is the desired estimate.
\end{proof}

To close the {\it a priori} estimate, we need to extract the dissipation term $\|\psi_t\|^2$, which is available from the following Lemma.

\begin{lemma}\label{lem:Phi}
	Under the same assumptions as in Proposition \ref{prop:apriori}, there exists a positive constant $C_4$ such that 
	\[\frac{d}{dt}\left(\frac{\mu}{2}\|G\|^2+\kappa\int_1^\infty \phi_r\phi_{tr}r^{n-1}\,dr\right)+\frac{5\rho_m}{8}\|\psi_t\|^2\le C_4( \|G_r\|^2+\|\phi_r\|^2)+C(\delta^2+\e)\mathcal{D}.\]
\end{lemma}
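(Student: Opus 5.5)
Multiply \eqref{eq:perturbation}$_2$ by $\psi_t r^{n-1}$ and integrate over $(1,\infty)$. Since $\psi_t(1)=0$, the viscous term integrates by parts without boundary contribution:
\[
\mu\<G_r,\psi_t\> = -\mu\int_1^\infty G\,(r^{n-1}\psi_t)_r\,dr = -\mu\<G,G_t\> = -\frac{\mu}{2}\frac{d}{dt}\|G\|^2 ,
\]
because $(r^{n-1}\psi_t)_r/r^{n-1}=G_t$. The left-hand side gives $\<\rho\psi_t,\psi_t\>\ge\rho_m\|\psi_t\|^2$.

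The capillary term is the key step. Write $\kappa\<\rho A(\phi)_r,\psi_t\>=\kappa\int_1^\infty A(\phi)_r\,\rho\psi_t\,r^{n-1}\,dr$. Differentiating \eqref{eq:perturbation}$_1$ in $t$ gives $(r^{n-1}\rho\psi_t)_r=-r^{n-1}\phi_{tt}-(r^{n-1}\rho_t\psi)_r$. Integrate by parts; the boundary term vanishes since $\psi_t(1)=0$. This gives
\[
\kappa\<\rho A(\phi)_r,\psi_t\> = \kappa\<A(\phi),\phi_{tt}\> + \kappa\int_1^\infty A(\phi)(r^{n-1}\rho_t\psi)_r\,dr .
\]
Since $\phi_{tr}(1)=0$ and $A(\phi)r^{n-1}=(r^{n-1}\phi_r)_r$, we have
\[
\<A(\phi),\phi_{tt}\>=-\<\phi_r,\phi_{ttr}\>=-\frac{d}{dt}\<\phi_r,\phi_{tr}\>+\|\phi_{tr}\|^2 .
\]
Moving the time derivatives to the left yields exactly the functional $\frac{\mu}{2}\|G\|^2+\kappa\<\phi_r,\phi_{tr}\>$ of the statement. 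The cost is the positive term $\kappa\|\phi_{tr}\|^2$ on the right. From $\phi_{tr}=-(\rho G_r+\rho_rG+\rho_{rr}\psi+\rho_r\psi_r)$ this is bounded by $C\|G_r\|^2+C(\delta^2+\e^2)\mathcal{D}$, using $\|\rho_r\|_{L^\infty}\le C(\delta+\e)$, $\rho_{rr}=\tilde\rho_{rr}+\phi_{rr}$, and Lemmas \ref{lem:Hardy}--\ref{lem:exp-Hardy} exactly as in $J_{73}$.

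For the remaining term, $\rho_t=\phi_t=-\rho G-\rho_r\psi$ is $O(\|G\|+\dots)$ and it multiplies $\psi$, which is small in $L^\infty$. I would keep it in the form $-\kappa\<A(\phi)_r,\rho_t\psi\>$ and bound it by $\|A(\phi)_r\|\,\|\psi\|_{L^\infty}\|\phi_t\|$. The factor $\|A(\phi)_r\|$ involves $\phi_{rrr}$, which is not in $\mathcal{D}$, so I would instead integrate by parts once more, or keep the form $\int A(\phi)(r^{n-1}\rho_t\psi)_r\,dr$. Expanding $(\rho_t\psi)_r=\phi_{tr}\psi+\phi_t\psi_r+\frac{n-1}{r}\phi_t\psi$, each piece is bounded by $\|A(\phi)\|\cdot\e\cdot(\|\phi_{tr}\|+\|\phi_t\|)\le C\e\mathcal{D}$, using \eqref{est:Aphi} and \eqref{est:phi_t}.

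The other terms are handled by Young's inequality with $\tau\|\psi_t\|^2$:
\begin{itemize}
\item $\<\rho\psi\psi_r,\psi_t\>\le C\e\mathcal{D}$.
\item $\<(P-\tilde P)_r,\psi_t\>\le \tau\|\psi_t\|^2+C\|\phi_r\|^2+C\delta^2\|\phi_r\|^2$, after splitting $(P-\tilde P)_r=P'(\rho)\phi_r+(P'(\rho)-P'(\tilde\rho))\tilde\rho_r$ and using Lemma \ref{lem:exp-Hardy}.
\item $\<h'(\tilde\rho)\tilde\rho_r\phi,\psi_t\>\le C\delta\|\phi_r\|\|\psi_t\|\le C\delta^2\mathcal{D}+\tau\|\psi_t\|^2$.
\end{itemize}
Choosing $\tau\le 3\rho_m/16$ leaves $\frac{5\rho_m}{8}\|\psi_t\|^2$ on the left.

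The main obstacle is the bookkeeping of the capillary term. One must make sure no $\phi_{rrr}$ survives without a small factor, since only $\|\phi_{rr}\|$ sits in $\mathcal{D}$. The boundary terms at $r=1$ must all vanish, which uses $\psi_t(1)=0$ and $\phi_{tr}(1)=0$, the latter inherited from $\phi_r(1)=0$. If the expansion of $(r^{n-1}\rho_t\psi)_r$ produces $\rho_{rr}$-type terms, I would treat them as in $J_9$, splitting $\tilde\rho_{rr}$ (exponentially small, absorbed by Lemma \ref{lem:exp-Hardy}) from $\phi_{rr}$ (absorbed using $\|\psi\|_{L^\infty}\le C\e$).
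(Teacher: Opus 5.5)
Your proof is correct. Its skeleton matches the paper's: the multiplier $\psi_t r^{n-1}$, the identity $(r^{n-1}\rho\psi_t)_r=-r^{n-1}\phi_{tt}-(r^{n-1}\phi_t\psi)_r$, the resulting functional $\frac{\mu}{2}\|G\|^2+\kappa\langle\phi_r,\phi_{tr}\rangle$, and the bound $\kappa\|\phi_{tr}\|^2\le C\|G_r\|^2+C(\delta^2+\e^2)\mathcal{D}$ all coincide with the paper.

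The one genuine difference is the capillary remainder. The paper integrates it by parts once more into $-\kappa\langle A(\phi)_r,\phi_t\psi\rangle$; the boundary term vanishes since $\psi(1)=0$. It then resolves exactly the worry you raise. Solving the momentum equation for $\kappa\rho A(\phi)_r$ gives $\|A(\phi)_r\|\le C\sqrt{\mathcal{D}}$, so the term is at most $C\|\psi\|_{L^\infty}\|\phi_t\|\sqrt{\mathcal{D}}\le C\e\mathcal{D}$.

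You instead keep the term as $\kappa\langle A(\phi),\phi_{tr}\psi+\phi_t\psi_r+\frac{n-1}{r}\phi_t\psi\rangle$. You then use only $\|A(\phi)\|\le C\|\phi_{rr}\|$ together with $\|\phi_{tr}\|,\|\phi_t\|\le C\sqrt{\mathcal{D}}$, which you need anyway for the $\kappa\|\phi_{tr}\|^2$ term. Your route is more self-contained, since it never returns to the momentum equation. The paper's route instead produces an elliptic-type bound on $A(\phi)_r$ by the dissipation, which reappears in Section 4 as \eqref{est:Aphi_r}.

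Two small points in your version:
\begin{itemize}
\item Be explicit about the middle piece $\langle A(\phi),\phi_t\psi_r\rangle$. Since $\|\psi_r\|_{L^\infty}$ is not controlled by $N$, put $\phi_t$ in $L^\infty$ via Lemma \ref{lem:Hardy}, $\|\phi_t\|_{L^\infty}\le C\|\phi_{tr}\|$, and take the small factor from $\|\psi_r\|\le N<\e$.
\item In $\langle A(\phi),\phi_{tt}\rangle=-\langle\phi_r,\phi_{ttr}\rangle$ the boundary term is $\phi_r(1)\phi_{tt}(1)$. It vanishes because $\phi_r(1)=0$, not because $\phi_{tr}(1)=0$; this is harmless, since both hold.
\end{itemize}
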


\begin{proof}
	We multiply \eqref{eq:perturbation}$_2$ by $\psi_t r^{n-1}$ and integrate over $(1,\infty)$ to get
	\begin{equation}\label{est:high-0}
		\|\sqrt{\rho}\psi_t\|^2+\<\rho\psi\psi_r,\psi_t\>+\<(P-\tilde{P})_r,\psi_t\>=\mu\<L(\psi) ,\psi_t\>+\kappa\<\rho A(\phi)_r,\psi_t\>+\<h'(\tilde{\rho})\tilde{\rho}_r\phi,\psi_t\>.
	\end{equation}
	The first term of the right-hand side of \eqref{est:high-0} becomes
	\begin{align*}
		\mu \<L(\psi),\psi_t\> = \mu\<G_r,\psi_t\>=-\mu\int_1^\infty G\left(\psi_{tr}+\frac{n-1}{r}\psi_t\right)r^{n-1}\,dr=-\mu\int_1^\infty GG_t r^{n-1}\,dr=-\frac{\mu}{2}\frac{d}{dt}\|G\|^2.
	\end{align*}
	Moreover, \eqref{eq:perturbation}$_1$ implies 
	\[\pa_r(\rho\psi_t r^{n-1})=-\phi_{tt}r^{n-1}-\pa_r(\phi_t\psi r^{n-1}).\]
	Using this, the second term of the right-hand side of \eqref{est:high-0} can be reformulated as
	\begin{align}
		\begin{aligned}\label{est:high-1}
		\kappa\<\rho A(\phi)_r,\psi_t\>&=-\kappa\int_1^\infty A(\phi)\pa_r(\rho\psi_t r^{n-1})\,dr\\
		&=\kappa\int_1^\infty A(\phi)\phi_{tt}r^{n-1}\,dr +\kappa\int_1^\infty A(\phi)\pa_r (\phi_t \psi r^{n-1})\,dr.
		\end{aligned}
	\end{align}
	By the definition of $A(\phi)$, the first term of the right-hand side of \eqref{est:high-1} can be written as
	\begin{align*}
		\kappa\int_1^\infty A(\phi)\phi_{tt}r^{n-1}\,dr&=\kappa\int_1^\infty \phi_{tt}\pa_r(\phi_r r^{n-1})\,dr=-\kappa\int_1^\infty \phi_r\phi_{ttr}r^{n-1}\,dr.\\
		&=-\kappa\frac{d}{dt}\<\phi_r,\phi_{tr}\>+\kappa\|\phi_{tr}\|^2,
	\end{align*}
	and the second term of the right-hand side of \eqref{est:high-1} is 
	\begin{align*}
		\kappa\int_1^\infty A(\phi)\pa_r(\phi_t\psi r^{n-1})\,dr=-\kappa\<A(\phi)_r,\phi_t\psi\>.
	\end{align*}
	Therefore, the equation \eqref{est:high-0} is rearranged as
	\begin{align*}
		&\|\sqrt{\rho}\psi_t\|^2+\frac{\mu}{2}\frac{d}{dt}\|G\|^2+\kappa\frac{d}{dt}\<\phi_r,\phi_{tr}\>\\
		&=-\<\rho\psi\psi_r,\psi_t\> -\<(P-\tilde{P})_r,\psi_t\>+\kappa\|\phi_{tr}\|^2-\kappa\<A(\phi)_r,\phi_t\psi\>+\<h'(\tilde{\rho})\tilde{\rho}_r\phi,\psi_t\>=:\sum_{i=1}^5 K_i.
	\end{align*}
	Below, we estimate each $K_i$ separately.\\
	
	\noindent $\bullet$ (Estimate of $K_1$): We use $\|\psi\|_{L^\infty}\le C\e$ and Young's inequality to get
	\[K_1\le C \|\psi\|_{L^\infty} \|\psi_r\|\|\psi_t\|\le C\e\|\psi_r\|\|\psi_t\|\le \frac{\rho_m}{8}\|\psi_t\|^2+C\e^2\mathcal{D}.\]
	
	\noindent $\bullet$ (Estimate of $K_2$): Again, we use Young's inequality to get
	\begin{align*}
		K_2 \le C\|\phi_r\|\|\psi_t\|\le \frac{\rho_m}{8}\|\psi_t\|^2 +C\|\phi_r\|^2.
	\end{align*}
	\noindent $\bullet$ (Estimate of $K_3$): We use \eqref{eq:perturbation}$_1$ and $\rho=\tilde{\rho}+\phi$ to estimate $K_3$ as
	\begin{align*}
		K_3&=\kappa\|\rho_rG+\rho G_r+\rho_{rr}\psi+\rho_r\psi_r\|^2\le C\kappa(\|\rho_rG\|^2+\|\rho G_r\|^2+\|\rho_{rr}\psi\|^2+\|\rho_r\psi_r\|^2)\\
		&\le C(\|\tilde{\rho}_rG\|^2+\|\phi_r G\|^2)+C\|G_r\|^2+C(\|\tilde{\rho}_{rr}\psi\|^2+\|\phi_{rr}\psi\|^2)+C(\|\tilde{\rho}_r\psi_r\|^2+\|\phi_r\psi_r\|^2).
	\end{align*}
	Now, using $|\tilde{\rho}_r|,|\tilde{\rho}_{rr}|\le C|\rho_b|<C\delta$, $\|\psi\|_{L^\infty}^2,\|\phi_r\|^2_{L^\infty}\le CN^2<C\e^2$, and Lemma \ref{lem:exp-Hardy}, we further estimate $K_3$ as
	\begin{align*}
		K_3&\le C\|G_r\|^2 + C\delta^2\|G e^{-\sigma r}\|^2 + C\e^2\|G\|^2+C\delta^2\|\psi e^{-\sigma r}\|^2+C\e^2\|\phi_{rr}\|^2+C(\delta^2+\e^2)\|\psi_r\|^2\\
		&\le C\|G_r\|^2 +C\delta^2\|G_r\|^2+C\e^2 \|G\|^2+C\delta^2\|\psi_r\|^2+C\e^2\|\phi_{rr}\|^2+C(\delta^2+\e^2)\|\psi_r\|^2\\
		&\le C\|G_r\|^2+C(\delta^2+\e^2)\mathcal{D}.
	\end{align*}
	\noindent $\bullet$ (Estimate of $K_4$): To estimate $K_4$, we first note that from \eqref{eq:perturbation}$_2$,
	\[\kappa \rho A(\phi)_r=\rho\psi_t +\rho\psi\psi_r+(P-\tilde{P})_r-\mu G_r -h'(\tilde{\rho})\tilde{\rho}_r\phi.\]
	Therefore, using the lower and upper bounds $\rho_m\le \rho\le \rho_M$, we have
	\begin{align*}
		\kappa^2\rho_m^2\|A(\phi)_r\|^2&\le C\left(\rho_M^2\|\psi_t\|^2+\rho_M^2\|\psi\psi_r\|^2+\|(P-\tilde{P})_r\|^2+\mu^2 \|G_r\|^2+\|h'(\tilde{\rho})\tilde{\rho}_r\phi\|^2\right)\\
		&\le C(\|\psi_t\|^2+\e^2\|\psi_r\|^2+\|\phi_r\|^2+\|G_r\|^2+\|\phi\tilde{\rho}_r\|^2)\\
		&\le C\mathcal{D},
	\end{align*}
	where we used $\|\psi\|_{L^\infty}<\e$ and $\|\phi\tilde{\rho}_r\|\le C|\rho_b|\|\phi_r\|$. This implies $\|A(\phi)_r\|\le C\sqrt{\mathcal{D}}$. Hence, we estimate $K_4$ as
	\begin{align*}
		K_4&= \kappa\<A(\phi)_r,\phi_t\psi\>\le \kappa\|A(\phi)_r\|\|\phi_t\psi\|\le C\|\psi\|_{L^\infty}\|\phi_t\|\sqrt{\mathcal{D}}\le C\e\mathcal{D}.
	\end{align*}
	Here, we use \eqref{est:phi_t} in the last inequality.\\
	
	\noindent $\bullet$ (Estimate of $K_5$): Finally, we use Lemma \ref{lem:exp-Hardy} and Young's inequality to estimate $K_5$ as
	\begin{align*}
		K_5&\le C|\rho_b|\|\phi e^{-\sigma r}\|\|\psi_t\|\le C\delta\|\phi_r\|\|\psi_t\|\le \frac{\rho_m}{8}\|\psi_t\|^2 +C\delta^2\mathcal{D}. 
	\end{align*}
	
	Gathering all the estimates on $K_i$, there exists a positive constant $C_4$ such that
	\begin{align*}
		\frac{d}{dt}\left(\frac{\mu}{2}\|G\|^2+\kappa\<\phi_r,\phi_{tr}\>\right)+\frac{5\rho_m}{8}\|\psi_t\|^2\le C_4(\|G_r\|^2+\|\phi_r\|^2) + C(\delta^2+\e)\mathcal{D}.
	\end{align*}
\end{proof}

\subsection{Proof of Proposition \ref{prop:apriori}}

Now, we are ready to prove Proposition \ref{prop:apriori}. We define energy functionals $\mathcal{E}_i$ for $i=1,2,3,4$ as 
\begin{align*}
	&\mathcal{E}_1:=\int_1^\infty \left(\frac{1}{2}\rho \psi^2+Q(\rho|\tilde{\rho})+\frac{\kappa}{2}\phi_r^2\right)r^{n-1}\,dr,\\
	&\mathcal{E}_2:=\<\rho\psi,\phi_r\>,\\
	&\mathcal{E}_3:=\int_1^\infty \left(\frac{1}{2}\rho \psi_r^2+\frac{\kappa}{2}\phi_{rr}^2\right)r^{n-1}\,dr,\\
	&\mathcal{E}_4:=\frac{\mu}{2}\|G\|^2+\kappa\<\phi_r,\phi_{tr}\>.
\end{align*}
Then, Lemma \ref{lem:L2}, Lemma \ref{lem:cross}, Lemma \ref{lem:H1}, and Lemma \ref{lem:Phi} imply the following estimates respectively:

\begin{align}
	\begin{aligned}\label{est:all}
	&\frac{d\mathcal{E}_1}{dt}+\mu \|G\|^2=0,\\
	&\frac{d\mathcal{E}_2}{dt} + c_2(\|\phi_r\|^2+\|\phi_{rr}\|^2)\le C_2\|G\|^2+C(\delta+\e^2)\mathcal{D},\\
	&\frac{d\mathcal{E}_3}{dt} + \frac{3\mu}{4}\|G_r\|^2\le C_3(\|\phi_{r}\|^2+\|\phi_{rr}\|^2+\|\psi_r\|^2)+C(\delta+\e)\mathcal{D},\\
	&\frac{d\mathcal{E}_4}{dt} + \frac{5\rho_m}{8}\|\psi_t\|^2\le C_4\left(\|G_r\|^2+\|\phi_r\|^2\right)+C(\delta^2+\e)\mathcal{D}.
	\end{aligned}
\end{align}
We now construct the combined energy functional 
\begin{equation}\label{def:L}
	\mathcal{L}:=\mathcal{E}_1+w_2\mathcal{E}_2+w_3\mathcal{E}_3+w_4\mathcal{E}_4,
\end{equation}
and choose the weights $w_i$ so that $\mathcal{L}$ is equivalent to $N^2$. From \eqref{est:all}, $\mathcal{L}$ satisfies

\begin{align*}
	&\frac{d\mathcal{L}}{dt}+ \mu \|G\|^2 +w_2 c_2(\|\phi_r\|^2+\|\phi_{rr}\|^2) + w_3 \frac{3\mu}{4}\|G_r\|^2+w_4\frac{5\rho_m}{8}\|\psi_t\|^2\\
	&\le w_2 C_2\|G\|^2 +w_3 C_3(\|\phi_{r}\|^2+\|\phi_{rr}\|^2+\|\psi_r\|^2)+ w_4 C_4(\|G_r\|^2+\|\phi_r\|^2)+C(\delta+\e)\mathcal{D}.
\end{align*}
First of all, we choose $w_2$ sufficiently small so that $w_2 C_2<\frac{\mu}{4}$. Then, we choose $w_3$ further small so that
\[w_3C_3<\min\left\{\frac{\mu}{4},\frac{w_2c_2}{4}\right\}.\]
Finally, we choose $w_4$ so that
\[w_4C_4<\min\left\{\frac{3w_3\mu}{8},\frac{w_2c_2}{4}\right\}.\]
These choices of $w_i$ imply that
\[\frac{d\mathcal{L}}{dt}+\frac{\mu}{2}\|G\|^2+\frac{w_2c_2}{2}(\|\phi_r\|^2+\|\phi_{rr}\|^2)+w_3\frac{3\mu}{8}\|G_r\|^2+w_4\frac{5\rho_m}{8}\|\psi_t\|^2\le C(\delta+\e)\mathcal{D}.\]
Then, it follows from the smallness of $\delta$ and $\e$, and the definition of $\mathcal{D}$ that there exists a positive constant $c_*$ such that for all $0\le t\le T$,
\[\frac{d\mathcal{L}}{dt}+c_*\mathcal{D}\le 0,\quad\mbox{therefore}\quad\mathcal{L}(t)+c_*\int_0^t\mathcal{D}(s)\,ds\le \mathcal{L}(0).\]
To complete the proof of Proposition \ref{prop:apriori}, it suffices to show that $\mathcal{L}$ is equivalent to $N^2$. We first show that $|\mathcal{E}_4|\le CN^2$. Since $r\ge 1$, it is directly observed that
\begin{equation}\label{est:G}
	\|G\|^2 = \|\psi_r\|^2+(n-1)\|\psi\|_{L^2_{r^{n-3}}}^2\le C(\|\psi\|^2+\|\psi_r\|^2)\le CN^2.
\end{equation}
To bound $\<\phi_r,\phi_{tr}\>$ in $\mathcal{E}_4$, we use \eqref{eq:perturbation}$_1$ to separate it as
\begin{align*}
	\<\phi_r,\phi_{tr}\>=-\<\phi_r,\rho_rG\>-\<\phi_r,\rho G_r\>-\<\phi_r,\rho_{rr}\psi\>-\<\phi_r,\rho_r\psi_r\>.
\end{align*} 
Using integration by parts, we get
\[-\<\phi_r,\rho G_r\>=\int_1^\infty G\pa_r(\rho\phi_r r^{n-1})\,dr=\<G,\rho A(\phi)+\rho_r\phi_r\>.\]
Hence, 
\[\<\phi_r,\phi_{tr}\>=\<\rho G,A(\phi)\>-\<\phi_r,\rho_{rr}\psi\>-\<\phi_r,\rho_r\psi_r\>.\]
To show that each term on the right-hand side is bounded by $CN^2$, we first recall from \eqref{est:Aphi} and \eqref{est:G} that
\[\|A(\phi)\|^2\le C\|\phi_{rr}\|^2\le CN^2,\quad \mbox{and}\quad \|G\|^2\le CN^2.\]
Thus, the first term is bounded as
\[\<\rho G,A(\phi)\> \le C\|G\|\|A(\phi)\|\le CN^2.\]
We estimate the second term by using Lemma \ref{lem:Hardy} as
\[\<\phi_r,\rho_{rr}\psi\>\le\|\phi_r\|_{L^\infty}(\|\tilde{\rho}_{rr}\|+\|\phi_{rr}\|)\|\psi\|\le C(\delta+\e)\|\phi_{rr}\|\|\psi\|\le C(\delta+\e)N^2.\]
Finally, the third term is bounded as
\[\<\phi_r,\rho_r\psi_r\>\le \|\phi_r\|(\|\phi_r\|_{L^\infty}+\|\tilde{\rho}_r\|_{L^\infty})\|\psi_r\|\le C(\delta+\e)N^2.\]
Collecting the above estimates and using \eqref{est:G} once again, we get
\[|\mathcal{E}_4|\le C\|G\|^2+C|\<\phi_r,\phi_{tr}\>|<C_*N^2.\]
Since the other terms in $\mathcal{E}_1$, $\mathcal{E}_2$ and $\mathcal{E}_3$ are easily bounded by $CN^2$, we conclude that $\mathcal{L}\le CN^2$. To show the reverse inequality, we use the lower and upper bounds of $\rho$ to obtain
\begin{align*}
	\mathcal{L}&\ge \frac{\rho_m}{2}\|\psi\|^2 + c_Q\|\phi\|^2 + \frac{\kappa}{2}\|\phi_r\|^2-\frac{w_2\rho_M}{2}\|\psi\|^2-\frac{w_2\rho_M}{2}\|\phi_r\|^2 + \frac{w_3\rho_m}{2}\|\psi_r\|^2+\frac{w_3\kappa}{2}\|\phi_{rr}\|^2-w_4 C_* N^2\\
	&=\left(\frac{\rho_m}{2}-\frac{w_2\rho_M}{2}-w_4C_*\right)\|\psi\|^2+(c_Q-w_4C_*)\|\phi\|^2\\
	&\quad +\left(\frac{\kappa}{2}-\frac{w_2\rho_M}{2}-w_4C_*\right)\|\phi_r\|^2+\left(\frac{w_3\rho_m}{2}-w_4C_*\right)\|\psi_r\|^2+\left(\frac{w_3\kappa}{2}-w_4C_*\right)\|\phi_{rr}\|^2,
\end{align*}
where $c_Q$ is the constant satisfying $Q(\rho|\tilde{\rho})\ge c_Q|\rho-\tilde{\rho}|^2$. Hence, once we choose the weights $w_2$ and $w_4$ further small so that
\[w_2<\min\left\{\frac{\rho_m}{4\rho_M},\frac{\kappa}{4\rho_M}\right\},\quad w_4<\min\left\{\frac{\rho_m}{8C_*},\frac{c_Q}{2C_*},\frac{\kappa}{8C_*},\frac{w_3\rho_m}{4C_*}, \frac{w_3\kappa}{4C_*}\right\},\]
we conclude that there exists another positive constant $c_{**}$ such that
\[\mathcal{L}\ge c_{**}\left(\|\phi\|^2+\|\psi\|^2+\|\phi_{r}\|^2+\|\phi_{rr}\|^2+\|\psi_r\|^2\right)=c_{**}N^2.\]
Therefore, for the above choice of the weights $w_2$, $w_3$, and $w_4$, we show that $\mathcal{L}$ is equivalent to $N^2$, and therefore, we obtain the desired {\it a priori} bound \eqref{est:apriori}. This completes the proof of Proposition \ref{prop:apriori}.

\section{Global well-posedness and time-asymptotic behavior}\label{sec:global}
\setcounter{equation}{0}

In this section, we prove the global well-posedness of the radially symmetric NSK equations \eqref{eq:NSK_spherical} when the initial perturbation and boundary data are sufficiently small. Furthermore, we prove that the global solution converges to the stationary solution as $t\to\infty$.

\subsection{Global well-posedness of \eqref{eq:NSK_spherical}}

The global well-posedness follows from the local well-posedness 
combined with the a priori estimate obtained in Proposition~\ref{prop:apriori}. 
The local-in-time well-posedness of the problem 
\eqref{eq:NSK_spherical}--\eqref{eq:boundary} can be established by the standard iteration scheme: one solves the linearized momentum equation with the capillary term, updates the density through the continuity equation, and shows the convergence of the approximate solutions via the energy method. Since the radial reduction turns \eqref{eq:NSK_spherical} into a one-dimensional system on the half-line $(1,\infty)$ with the weight $r^{n-1}$, the argument is parallel to the local existence theory for the initial-boundary value problems of the one-dimensional NSK equations (see, e.g., \cite{HKOpre, LTY22, LXC23, LZ21}) and for the exterior domain problem of the Navier--Stokes equations \cite{J96}; the compatibility condition stated in Section~1 is imposed precisely for this purpose. For the multi-dimensional local theory without the symmetry assumption, we refer to \cite{K08}. As the proof involves no new difficulty, we only 
state the result and omit the proof.

\begin{proposition}\label{prop:local}
	Let $n \ge 3$ and let $\tilde\rho$ be the stationary solution to the 
	impermeable wall problem satisfying \eqref{eq:stationary}. Suppose that 
	the initial data $(\rho_0, u_0)$ satisfies
	\[ (\rho_0 - \tilde\rho,\, u_0) \in H^3(1,\infty) \times H^2(1,\infty), \quad 
	0 < \underline{\rho} \le \rho_0(r) \le \overline{\rho}, \quad r \ge 1,\]
	for some positive constants $\underline{\rho}$ and $\overline{\rho}$, 
	together with the boundary conditions
	\[ u_0(1) = 0, \quad \rho_{0r}(1) = \rho_b,\]
	and the compatibility condition \eqref{eq:compatibility}. Then, there exists a positive constant $T_0$, depending only on 
	$\|(\rho_0 - \tilde\rho, u_0)\|_{H^3 \times H^2}$ and $\underline{\rho}$, 
	such that the initial-boundary value problem 
	\eqref{eq:NSK_spherical}--\eqref{eq:boundary} admits a unique solution $(\rho, u)$ satisfying
	\[(\rho - \tilde\rho,\, u) \in 
	C([0,T_0]; H^3(1,\infty)) \times C([0,T_0]; H^2(1,\infty))\]
	and 
	\[ \frac{\underline{\rho}}{2} < \rho(t,r) < 2\overline{\rho}, 
	\quad 0 \le t \le T_0, \quad r \ge 1.\]
\end{proposition}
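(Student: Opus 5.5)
The statement just above is the local well-posedness result, Proposition \ref{prop:local}. I would prove it by a standard linearization--iteration scheme in the weighted space $L^2_{r^{n-1}}$, working with the perturbation $(\phi,\psi)=(\rho-\tilde\rho,u)$. This keeps the boundary conditions homogeneous: $\psi(t,1)=0$ and $\phi_r(t,1)=0$. The key structural point is that the capillary term $\kappa\rho A(\phi)_r$ is third order in $\phi$, and the transport equation alone does not gain regularity for $\phi$. So I would not iterate $\phi$ through the continuity equation in isolation. Instead, I would solve at each step a linear system in which the capillary term is tested against the continuity equation, exactly as in the identity $\kappa\<\rho A(\phi)_r,\psi\>=-\frac{\kappa}{2}\frac{d}{dt}\|\phi_r\|^2$ from Lemma \ref{lem:L2}.

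Concretely, given an iterate $(\phi^k,\psi^k)$ in the space
\[X_T=\{\phi\in C([0,T];H^3),\ \psi\in C([0,T];H^2)\cap L^2(0,T;H^3)\},\]
with $\rho^k=\tilde\rho+\phi^k\ge\underline\rho/2$, I would define $(\phi^{k+1},\psi^{k+1})$ as the solution of the linear problem
\[\phi^{k+1}_t+\frac{(r^{n-1}\rho^k\psi^{k+1})_r}{r^{n-1}}=0,\qquad \rho^k\psi^{k+1}_t-\mu L(\psi^{k+1})-\kappa\rho^kA(\phi^{k+1})_r=F(\phi^k,\psi^k).\]
Here $F$ collects the convective, pressure and $h'(\tilde\rho)\tilde\rho_r\phi$ terms. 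This linear problem is a mixed parabolic--dispersive system with variable coefficients. I would solve it by a Galerkin method on $(1,\infty)$, or on truncated intervals $(1,R)$ followed by $R\to\infty$. For the a priori bounds I would use the linear analogues of Lemmas \ref{lem:L2}, \ref{lem:H1} and \ref{lem:Phi}, pushed one derivative higher to reach $H^3\times H^2$. No smallness or dissipation structure is needed: Gronwall on $[0,T]$ suffices, since $\tilde\rho$ is smooth with bounded derivatives.

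Next I would prove uniform bounds. For $T_0$ small, depending only on $\|(\phi_0,\psi_0)\|_{H^3\times H^2}$ and $\underline\rho$, the map $(\phi^k,\psi^k)\mapsto(\phi^{k+1},\psi^{k+1})$ sends a ball of $X_{T_0}$ into itself. The density bound $\underline\rho/2<\rho<2\overline\rho$ follows from $\|\phi(t)-\phi_0\|_{L^\infty}\le C\int_0^t\|\phi_t\|_{H^1}\,ds\le CT_0$ together with the 1D Sobolev embedding from Lemma \ref{lem:Hardy}. I would then prove contraction in the lower norm $C([0,T_0];H^1)\times C([0,T_0];L^2)$ by taking differences of consecutive iterates and running the $L^2$-type estimate of Lemma \ref{lem:L2}. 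The limit is a solution, and uniqueness comes from the same difference estimate. Time continuity in the top norm follows from the standard weak-continuity plus energy-equality argument.

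The main obstacle is the top-order estimate for $\partial_r^3\phi$ and $\partial_r^2\psi$ at the boundary $r=1$. Differentiating the momentum equation twice produces boundary terms like $\psi_{rr}(1)G_{rr}(1)$ and $A(\phi)_{rr}(1)$, and these are not obviously controlled. I would handle them as in the estimate of $B$ in Lemma \ref{lem:H1}. The idea is to use time derivatives rather than spatial ones: estimate $(\phi_t,\psi_t)$ via the differentiated-in-time system, which keeps the boundary conditions $\psi_t(1)=0$ and $\phi_{tr}(1)=0$. Then I would recover the spatial regularity elliptically from the equations, solving $\mu G_r+\kappa\rho A(\phi)_r=\rho\psi_t+\cdots$. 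Starting this time-derivative hierarchy requires $\psi_t(0)\in L^2$ and $\psi_t(0,1)=0$ at $t=0$, and the compatibility condition \eqref{eq:compatibility} is exactly what guarantees this.
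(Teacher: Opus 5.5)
The paper does not actually prove this proposition. It only names a decoupled iteration (linearized momentum equation with the capillary term, then a density update through the continuity equation) and defers to the literature. Your coupled linear system is a legitimate alternative, and you are right that pairing $\kappa\rho^kA(\phi^{k+1})_r$ with $\phi^{k+1}_t+\frac{(r^{n-1}\rho^k\psi^{k+1})_r}{r^{n-1}}=0$ keeps the cancellation of Lemma \ref{lem:L2}.

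However, freezing $\rho^k$ inside the flux has a cost your outline does not pay, and as written the contraction step fails. Put $\Phi=\phi^{k+1}-\phi^k$, $\Psi=\psi^{k+1}-\psi^k$, $\Phi'=\phi^k-\phi^{k-1}$. Then
\[
\Phi_t+\frac{(r^{n-1}\rho^k\Psi)_r}{r^{n-1}}=-\frac{(r^{n-1}\Phi'\psi^k)_r}{r^{n-1}},
\]
so testing the momentum difference by $\Psi$ gives
\[
\kappa\<\rho^kA(\Phi)_r,\Psi\>=-\frac{\kappa}{2}\frac{d}{dt}\|\Phi_r\|^2+\kappa\<A(\Phi),\frac{(r^{n-1}\Phi'\psi^k)_r}{r^{n-1}}\>.
\]
The last term puts $\Phi_{rr}$ against $\Phi'_r\psi^k$, or after integration by parts $\Phi_r$ against $\Phi'_{rr}\psi^k$. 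That is three derivatives on density differences. Your $H^1\times L^2$ energy plus the dissipation $\|(r^{n-1}\Psi)_r/r^{n-1}\|^2$ controls only $\|\Phi\|_{H^1}$ and $\|\Phi'\|_{H^1}$. Since $\Phi\neq\Phi'$, the symmetrization $\<\Phi_{rr},\psi\Phi_r\>=-\frac12\<\Phi_r^2,G\>$ used in Lemma \ref{lem:highest} is not available. So the estimate of Lemma \ref{lem:L2} alone does not give a contraction. Your uniqueness claim is fine: for two genuine solutions the corresponding term is $\<A(\Phi),\psi_2\Phi_r\>$ with the same $\Phi$, and it does symmetrize.

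The same defect appears in your top-order bound. Differentiating your linear system in $t$, the capillary term produces $\kappa\<A(\phi^{k+1}_t),\frac{(r^{n-1}\rho^k_t\psi^{k+1})_r}{r^{n-1}}\>$, which contains $\kappa\<A(\phi^{k+1}_t),\phi^k_{rt}\psi^{k+1}\>$. This is the iterate version of $\<\phi_{rt},\phi_{rrt}\psi\>$ in Lemma \ref{lem:highest}, which the paper closes by symmetry. Across iterates it instead needs $\phi_t\in L^2(0,T;H^2)$, equivalently $\psi\in L^2(0,T;H^3)$.

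You build $L^2(0,T;H^3)$ into $X_T$, but your tools do not produce it. The $(\phi_t,\psi_t)$-energy, whose only dissipation is $\mu\|G_t\|^2$, together with elliptic recovery gives $\psi\in L^\infty(0,T;H^2)$ and $\phi\in L^\infty(0,T;H^3)$. At the next order the momentum equation only controls the combination $\mu G_{rr}+\kappa\rho^kA(\phi)_{rr}$. So your remark that no dissipation structure is needed is misleading for this scheme.

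What is missing is the capillarity-induced smoothing of the density, i.e. the analogue of Lemma \ref{lem:cross}. Test the difference momentum equation by $\Phi_r$, and the time-differentiated one by $\phi^{k+1}_{rt}$, using $\Phi_r(1)=\phi^{k+1}_{rt}(1)=0$. This gives $\frac{d}{dt}\<\rho^k\Psi,\Phi_r\>+c\|\Phi_{rr}\|^2\le C\|(r^{n-1}\Psi)_r/r^{n-1}\|^2+\cdots$, and respectively a bound on $\int_0^T\|A(\phi^{k+1}_t)\|^2\,dt$. The bad terms are then absorbed, e.g.
\[
\kappa\Big|\<A(\Phi),\tfrac{(r^{n-1}\Phi'\psi^k)_r}{r^{n-1}}\>\Big|\le\eta\|A(\Phi)\|^2+C_\eta\|\psi^k\|_{H^2}^2\|\Phi'\|_{H^1}^2 .
\]
Also, $\psi^{k+1}\in L^2(0,T;H^3)$ follows from the continuity equation. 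Include these dissipative quantities in both the uniform bound and the contraction norm, and Gronwall on a short interval closes the argument.
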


In the following, we show that the local solution constructed in Proposition \ref{prop:local} can be extended to the global solution by the {\it a priori estimate} in Proposition \ref{prop:apriori} using the continuation argument. Suppose that the initial perturbation is small so that
\[N(0)<\e_0:=\frac{\e}{2\sqrt{C_0}},\]
where the constant $C_0$ is the same constant as in Proposition \ref{prop:apriori}. We define the function space
\[X(T):=C([0,T];H^3)\times C([0,T];H^2)\] 
and the maximal existence time $T_*$ as
\[T_*:=\sup\left\{T>0~\Big|~(\rho-\tilde{\rho},u)\in X(T),\quad N(t)<\e\quad \mbox{and}\quad \rho(t,r)> \frac{3}{4}\rho_+\quad \mbox{for all $t\in [0,T]$ and $r\ge 1$}\right\}.\]
Suppose that $T_*<+\infty$. Then, on the time interval $[0,T_*]$, we have $N(t)<\e$, and this implies
\[\|\rho(t)-\tilde{\rho}\|_{L^\infty}<CN(t)<C\e,\]
which guarantees $\rho(t,r)> \frac{3}{4}\rho_+$ for sufficiently small $\delta$ and $\e$. Furthermore, the {\it a priori} bound \eqref{est:apriori} implies
\[N(T_*)\le \sqrt{C_0}N(0)<\sqrt{C_0}\times\frac{\e}{2\sqrt{C_0}}<\frac{\e}{2}<\e.\]
Thus, if $T_*$ is finite, the only possible scenario is
\[\lim_{t\to T_*} \|(\rho-\tilde{\rho},u)\|_{H^3\times H^2}=+\infty.\]
Below, we will show that $\|(\phi,\psi)\|_{H^3\times H^2}$ is bounded on $[0,T_*)$ to yield a contradiction. Since $\|(\phi,\psi)\|_{H^2\times H^1}$ is already bounded by the {\it a priori} estimate, it suffices to show that $\|(\phi_{rrr},\psi_{rr})\|$ is bounded on $[0,T_*)$. However, it follows from \eqref{eq:perturbation}$_1$ that
\[\rho G_r = -\phi_{rt}-\rho_r G-\rho_{rr}\psi - \rho_r\psi_r, \]
which implies
\[\|G_r\|\le C\left(\|\phi_{rt}\|+\|\rho_r\|_{L^\infty}\|G\|+\|\rho_{rr}\|\|\psi\|_{L^\infty}+\|\rho_r\|_{L^\infty} \|\psi_r\|\right)\le C(\|\phi_{rt}\|+N).\]
Furthermore, we use \eqref{eq:perturbation}$_2$ to get
\[\kappa\rho A(\phi)_r = \rho\psi_t +\rho\psi\psi_r +(P-\tilde{P})_r-\mu G_r -h'(\tilde{\rho})\tilde{\rho}_r\phi.\]
This yields
\begin{equation}\label{est:Aphi_r}
	\|A(\phi)_r\|\le C(\|\psi_t\|+\|G_r\|+N)\le C(\|\psi_t\|+\|\phi_{rt}\|+N).
\end{equation}
Since $\psi_{rr} = G_r - \frac{n-1}{r}\psi_r + \frac{n-1}{r^2}\psi$ and $\phi_{rrr} = A(\phi)_r -\frac{n-1}{r}\phi_{rr}+\frac{n-1}{r^2}\phi_r$, we only need to show that $\|\psi_t\|$ and $\|\phi_{rt}\|$ are bounded on $[0,T_*)$, which is shown in the following lemma.

\begin{lemma}\label{lem:highest}
	Define the functional
	\[\mathcal{F}:=\int_1^\infty \left(\frac{\rho}{2}\psi_t^2+\frac{\kappa}{2}\phi_{rt}^2\right)r^{n-1}\,dr.\]
	Then, under the same assumptions as in Proposition \ref{prop:apriori}, 
	\[\mathcal{F}(t)+1\le(\mathcal{F}(0)+1)\exp\left(\int_0^{T_*}C(1+\|G_r(s)\|^2)\,ds\right)<+\infty\]
	for all $t\in [0,T_*)$.
\end{lemma}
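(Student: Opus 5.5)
The plan is to differentiate the perturbation system \eqref{eq:perturbation} in time and mimic the $L^2$-estimate of Lemma \ref{lem:L2} at the level of $(\psi_t,\phi_t)$. This should give a Gronwall-type inequality
\[\frac{d}{dt}\mathcal{F}\le C(1+\|G_r\|^2)(\mathcal{F}+1).\]
Integrating it yields $\log(\mathcal{F}(t)+1)-\log(\mathcal{F}(0)+1)\le \int_0^t C(1+\|G_r\|^2)\,ds$. The right-hand side is finite on $[0,T_*]$ because $T_*<\infty$ and $\int_0^{T_*}\|G_r\|^2\,ds\le \int_0^{T_*}\mathcal{D}\,ds\le C_0N^2(0)$ by Proposition \ref{prop:apriori}.

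\textbf{Time-differentiated equation.} Differentiating \eqref{eq:perturbation}$_2$ in $t$ gives
\[\rho\psi_{tt}+\rho_t\psi_t+(\rho\psi\psi_r)_t+(P-\tilde P)_{rt}=\mu G_{rt}+\kappa\rho A(\phi)_{rt}+\kappa\rho_tA(\phi)_r+h'(\tilde\rho)\tilde\rho_r\phi_t.\]
Multiply this by $\psi_t r^{n-1}$ and integrate, using $\psi_t(1)=0$ and $\phi_{rt}(1)=0$. The main terms are handled as follows.
\begin{itemize}
\item The term $\langle\rho\psi_{tt},\psi_t\rangle$ produces $\frac12\frac{d}{dt}\|\sqrt\rho\psi_t\|^2$ plus a term in $\rho_t$. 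Via the continuity equation, this term combines with $\rho_t\psi_t$ as in Lemma \ref{lem:L2}.
\item The viscous term gives $-\mu\|G_t\|^2$, exactly as in Lemma \ref{lem:L2}, since $G_t=\psi_{tr}+\frac{n-1}{r}\psi_t$.
\item For the capillary term, integrate by parts. Then use $\partial_r(r^{n-1}\rho\psi_t)=-r^{n-1}\phi_{tt}-\partial_r(r^{n-1}\rho_t\psi)$, which comes from differentiating \eqref{eq:perturbation}$_1$ in $t$. This turns $\kappa\langle\rho A(\phi)_{rt},\psi_t\rangle$ into $-\frac{\kappa}{2}\frac{d}{dt}\|\phi_{rt}\|^2$ plus the remainder $\kappa\langle A(\phi)_t,r^{1-n}\partial_r(r^{n-1}\phi_t\psi)\rangle$.
\end{itemize}
Together these produce $\frac{d}{dt}\mathcal{F}+\mu\|G_t\|^2$ on the left-hand side.

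\textbf{Remainders.} The remaining terms are bounded using the a priori bounds $N<\e$, $\|\psi\|_{L^\infty},\|\phi_r\|_{L^\infty}\le C\e$, and $\|\rho_r\|_{L^\infty}\le C(\delta+\e)$. The needed ingredients are:
\begin{itemize}
\item $\phi_t=-\rho G-\rho_r\psi$, which gives $\|\phi_t\|\le CN$ and $\|\phi_t\|_{L^\infty}\le C(\|G\|+\|G_r\|+N)$.
\item $\|\psi_r\|_{L^\infty}\le C(\|G_r\|+N)$, from \eqref{psi_rr} and Lemma \ref{lem:Hardy}.
\item $\|\psi_{tr}\|\le C\|G_t\|$, from Lemma \ref{lem:separate} with $\psi_t(1)=0$.
\item The elliptic bound \eqref{est:Aphi_r}, $\|A(\phi)_r\|\le C(\|\psi_t\|+\|\phi_{rt}\|+N)\le C(\sqrt{\mathcal{F}}+1)$.
\end{itemize}
With these, the pressure term $\langle (P-\tilde P)_{rt},\psi_t\rangle=-\langle (P'(\rho)\phi_t),r^{1-n}(r^{n-1}\psi_t)_r\rangle$ is bounded by $\frac{\mu}{8}\|G_t\|^2+C$. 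The convective and $\rho_t\psi_t$ terms are bounded by $\frac{\mu}{8}\|G_t\|^2+C(1+\|\psi_r\|_{L^\infty}^2)\mathcal{F}$, and $1+\|\psi_r\|^2_{L^\infty}\le C(1+\|G_r\|^2)$.

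\textbf{Main obstacle.} I expect the hardest terms to be the capillary remainders, namely $\kappa\langle\rho_tA(\phi)_r,\psi_t\rangle$ and the term involving $A(\phi)_t=\phi_{trr}+\frac{n-1}{r}\phi_{tr}$. The difficulty is that $\phi_{trr}$ is not controlled by $\mathcal{F}$. For the latter, I would integrate by parts back to put the derivative on $(\phi_t\psi)$. That produces $\langle \phi_{tr},(r^{1-n}(r^{n-1}\phi_t\psi)_r)_r\rangle$-type terms involving $\phi_{trr}\psi$. These can be handled by writing $\langle\phi_{rt},\phi_{rrt}\psi\rangle=-\frac12\langle \phi_{rt}^2,r^{1-n}(r^{n-1}\psi)_r\rangle$ and using the boundary condition $\phi_{rt}(1)=0$. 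The result is a bound $C\|G\|_{L^\infty}\mathcal{F}$, together with terms $\|\phi_t\|_{L^\infty}\|\psi_r\|_{L^\infty}\|\phi_{rt}\|$. The $\rho_t A(\phi)_r$ term is bounded using $\|\rho_t\|_{L^\infty}\le C(\|G_r\|+N)$ and $\|A(\phi)_r\|\le C(\sqrt{\mathcal F}+1)$. This gives $C(1+\|G_r\|)(\mathcal{F}+1)\le C(1+\|G_r\|^2)(\mathcal{F}+1)$.

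Collecting all terms and absorbing the $\|G_t\|^2$ contributions yields the differential inequality above, and Gronwall's lemma gives the claimed bound.
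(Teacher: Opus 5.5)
Your proposal is correct and follows essentially the same route as the paper. You test the time-differentiated momentum equation with $\psi_t r^{n-1}$ and extract $\mu\|G_t\|^2$. You then use the time-differentiated continuity equation to convert the capillary term into $-\frac{\kappa}{2}\frac{d}{dt}\|\phi_{rt}\|^2$ plus the remainder $\kappa\langle A(\phi_t),\phi_t G+\phi_{rt}\psi\rangle$. The $\phi_{rrt}\psi$ piece is handled through $\langle\phi_{rt},\phi_{rrt}\psi\rangle=-\frac12\langle\phi_{rt}^2,G\rangle$, and the $\rho_t A(\phi)_r$ piece through \eqref{est:Aphi_r}. You close with Gr\"onwall and $\int_0^{T_*}\|G_r\|^2\,ds\le C_0N^2(0)$.

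The differences are cosmetic. You integrate the pressure term by parts onto $G_t$ instead of expanding $\partial_r(P'(\rho)\phi_t)$. Also, the leftover after integrating the $\phi_{rrt}\psi$ term by parts is $\|\phi_t\|_{L^\infty}\|G_r\|\|\phi_{rt}\|+\|\psi_r\|_{L^\infty}\|\phi_{rt}\|^2$, not the product you wrote. It still fits the bound $C(1+\|G_r\|^2)(1+\mathcal{F})$.
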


\begin{proof}
	We first note that $\mathcal{F}(0)$ is finite since $(\phi_0,\psi_0)\in H^3\times H^2$, which implies $\psi_t(0), \phi_{rt}(0)\in L^2$. Now, it is straightforward to observe that
	\begin{align*}
		\frac{d\mathcal{F}}{dt} = \frac{1}{2}\<\rho_t\psi_t,\psi_t\>+\<\rho\psi_{tt},\psi_t\>+\kappa\<\phi_{rt},\phi_{rtt}\>.
	\end{align*}
	Taking time derivative on \eqref{eq:perturbation}$_2$ and taking inner product with $\psi_t$, we obtain
	\begin{align*}
		\<\rho\psi_{tt},\psi_t\> &= -\<\rho_t\psi_t ,\psi_t\>-\<\pa_t(\rho\psi\psi_r),\psi_t\>-\<\pa_t(P-\tilde{P})_r,\psi_t\>+\mu\<G_{rt},\psi_t\>\\
		&\quad+\kappa\<\pa_t(\rho A(\phi)_r),\psi_t\>+\<\pa_t(h'(\tilde{\rho})\tilde{\rho}_r\phi),\psi_t\>.
	\end{align*}
	However, note that
	\[\mu\<G_{rt},\psi_t\>=-\mu\int_1^\infty G_t \pa_r(\psi_t r^{n-1})\,dr=-\mu\|G_t\|^2.\]
	Therefore, we reformulate $\frac{d\mathcal{F}}{dt}$ as
	\begin{align*}
		\frac{d\mathcal{F}}{dt}+\mu\|G_t\|^2&=-\frac{1}{2}\<\rho_t\psi_t,\psi_t\>+\kappa\<\pa_t(\rho A(\phi)_r),\psi_t\>+\kappa\<\phi_{rt},\phi_{rtt}\>\\
		&\quad -\<\pa_t(\rho\psi\psi_r),\psi_t\>-\<\pa_t(P-\tilde{P})_r,\psi_t\>+\<\pa_t(h'(\tilde{\rho})\tilde{\rho}_r\phi),\psi_t\>\\
		&=:\sum_{i=1}^6 L_i.
	\end{align*}
	\noindent $\bullet$ (Estimate of $L_1$): Since $\rho_t =\phi_t = -\rho G-\rho_r\psi$, we obtain
	\begin{equation}\label{est:rhot}
		\|\rho_t\|_{L^\infty}\le \|\rho G\|_{L^\infty}+\|\rho_r\psi\|_{L^\infty}\le C(N+\|G_r\|),
	\end{equation}
	where we used $\|G\|_{L^\infty}\le C(\|G\|+\|G_r\|)$. Therefore, using $N<\e<1$,
	\[L_1\le C(N+\|G_r\|)\|\psi_t\|^2 \le C(1+\|G_r\|)\mathcal{F}.\]	
	 	
	\noindent $\bullet$ (Estimate of $L_2$ and $L_3$): Using integration by parts, we observe that
	\begin{equation}\label{est:L2}
		L_2=\kappa\<\rho_t A(\phi)_r,\psi_t\>+\kappa\<\rho A(\phi_t)_{r},\psi_t\>=\kappa\<\rho_t A(\phi)_r,\psi_t\>-\kappa\<A(\phi_t),\rho_r\psi_t\>-\kappa\<A(\phi_t),\rho G_t\>,
	\end{equation}
	where we used $\pa_r(\rho\psi_t r^{n-1})=r^{n-1}(\rho_r\psi_t+\rho G_t)$. Now, differentiating \eqref{eq:perturbation}$_1$ with respect to $t$, we obtain
	\[\rho G_t = -\phi_{tt} -\rho_t G -\rho_{rt}\psi -\rho_r\psi_t,\]
	which, after integration by parts, implies
	\begin{align*}
		-\kappa\<A(\phi_t),\rho G_t\> &= \kappa\<A(\phi_t),\phi_{tt}\>+\kappa\<A(\phi_t),\rho_t G+\rho_{rt}\psi+\rho_r\psi_t\>\\
		&=-\kappa\<\phi_{rt},\phi_{rtt}\>+\kappa\<A(\phi_t),\rho_tG+\rho_{rt}\psi+\rho_r\psi_t\>\\
		&=-L_3+\kappa\<A(\phi_t),\rho_tG+\rho_{rt}\psi+\rho_r\psi_t\>.
	\end{align*}	
	Substituting the above identity into \eqref{est:L2}, we get
	\begin{equation}\label{est:L2L3}
		L_2+L_3=\kappa\<\rho_t A(\phi)_r,\psi_t\> + \kappa\<A(\phi_t),\rho_tG+\rho_{rt}\psi\>.
	\end{equation}
	We use \eqref{est:Aphi_r} to get
	\[\kappa\<\rho_tA(\phi)_r,\psi_t\>\le C(N+\|G_r\|)(\|\psi_t\|+\|G_{r}\|+N)\|\psi_t\|\le C(\|G_r\|+\|G_r\|^2+N)(1+\mathcal{F}).\]
	Furthermore, it follows from the definition of $A(\phi_t)$ that
	\[\kappa\<A(\phi_t), \rho_tG +\rho_{rt}\psi\>=-\kappa\<\phi_{rt}, (\rho_tG +\rho_{rt}\psi)_r\>=-\kappa\<\phi_{rt},\phi_{rt}G + \phi_tG_r+\phi_{rrt}\psi+\phi_{rt}\psi_r\>.\]
	However, from
	\[\<\phi_{rt},\phi_{rrt}\psi\> = \frac{1}{2}\<\pa_r(\phi_{rt}^2),\psi\>=-\frac{1}{2}\<\phi_{rt}^2,G\>=-\frac{1}{2}\<\phi_{rt},\phi_{rt}G\>,\]
	and \eqref{est:G}, we get
	\[-\kappa\<\phi_{rt},\phi_{rt}G+\phi_{rrt}\psi\>=-\frac{\kappa}{2}\<\phi_{rt},\phi_{rt}G\>\le C\|G\|_{L^\infty}\|\phi_{rt}\|^2\le C(N+\|G_r\|)\mathcal{F}.\]
	Moreover, we use \eqref{est:rhot} to get
	\[-\kappa\<\phi_{rt},\phi_t G_r\>\le C\|\phi_t\|_{L^\infty}\|\phi_{rt}\|\|G_r\|\le C(N+\|G_r\|)\sqrt{\mathcal{F}}\|G_r\|\le C(\|G_r\|+\|G_r\|^2)(1+\mathcal{F}),\]
	and
	\[-\kappa\<\phi_{rt},\phi_{rt}\psi_r\>\le C\|\psi_r\|_{L^\infty}\|\phi_{rt}\|^2\le C(N+\|G_r\|)\mathcal{F},\]
	where we used $\|\psi_r\|_{L^\infty}\le C\|\psi_r\|^{1/2}\|\psi_{rr}\|^{1/2}\le C(\|\psi_r\|+\|\psi_{rr}\|)\le C(N+\|G_r\|)$. Combining all the estimates, and substitute into \eqref{est:L2L3}, we get
	\[L_2+L_3 \le C(1+\|G_r\|^2)(1+\mathcal{F}).\]
	
	\noindent $\bullet$ (Estimate of $L_4$): Using \eqref{est:rhot} and $\|\psi_r\|_{L^\infty}\le C(N+\|G_r\|)$, we estimate $L_4$ as
	\begin{align*}
		L_4 &= -\<\rho_t \psi\psi_r,\psi_t\>-\<\rho\psi_r,\psi_t^2\>-\<\rho\psi\psi_{rt},\psi_t\>\\
		&\le C\|\rho_t\|_{L^\infty}\|\psi\|_{L^\infty}\|\psi_r\|\|\psi_t\|+C\|\psi_r\|_{L^\infty}\|\psi_t\|^2+C\|\psi\|_{L^\infty}\|\psi_{rt}\|\|\psi_t\|\\
		&\le C(N+\|G_r\|)(1+\mathcal{F})+CN(\|G_t\|+\|\psi_t\|)\|\psi_t\|\le \frac{\mu}{2}\|G_t\|^2+C(1+\|G_r\|^2)(1+\mathcal{F}),
	\end{align*}
	where we utilized $G_t=\psi_{rt}+\frac{n-1}{r}\psi_t$.\\
	
	\noindent $\bullet$ (Estimate of $L_5$): Notice that
	\[\pa_t(P-\tilde{P})_r = \pa_r(P'(\rho)\phi_t)= P''(\rho)\phi_t\phi_r+P''(\rho)\phi_t\tilde{\rho}_r+P'(\rho)\phi_{rt}.\]
	Hence,  using \eqref{est:rhot}, we have
	\[L_5\le C(\|\phi_t\|_{L^\infty}\|\phi_r\|+\|\phi_t\|_{L^\infty}\|\tilde{\rho}_r\|_{L^2}+\|\phi_{rt}\|)\|\psi_t\|\le C(1+\|G_r\|^2)(1+\mathcal{F}).\]
	
	\noindent $\bullet$ (Estimate of $L_6$): Similarly, we notice that
	\[\pa_t(h'(\tilde{\rho})\tilde{\rho}_r\phi)=h'(\tilde{\rho})\tilde{\rho}_r\phi_t,\]
	and therefore,
	\[L_6\le C\|\phi_t\|_{L^\infty}\|\tilde{\rho}_r\|\|\psi_t\|\le C(1+\|G_r\|^2)(1+\mathcal{F}).\]
	Collecting all the estimates of $L_i$, we finally get
	\[\frac{d\mathcal{F}}{dt}+\frac{\mu}{2}\|G_t\|^2\le C(1+\|G_r\|^2)(1+\mathcal{F}),\]
	and the Gr\"onwall inequality implies that for all $t\in[0,T_*)$,
	\[\mathcal{F}(t)+1\le (\mathcal{F}(0)+1)\exp\left(\int_0^{T_*}C(1+\|G_r(s)\|^2)\,ds\right),\]
	which is finite since $T_*<+\infty$ and by the {\it a priori} estimate \eqref{est:apriori}.
\end{proof}
Therefore, Lemma \ref{lem:highest} shows that the highest derivative norm $\|(\phi_{rrr},\psi_{rr})\|$ is also bounded on $[0,T_*)$, which contradicts $\lim_{t\to T_*}\|(\phi,\psi)\|_{H^3\times H^2}=+\infty$. Thus, we conclude that $T_*=+\infty$ and the local solution constructed in Proposition \ref{prop:local} can be globally extended.

\subsection{Time-asymptotic behavior}

Now, we show the time-asymptotic behavior. Let $(\rho,u)$ be the global solution to \eqref{eq:NSK_spherical}. Our goal is to show that
\[\lim_{t\to\infty}(\|\phi(t)\|_{W^{1,\infty}}+\|\psi(t)\|_{L^\infty})=0.\]
First of all, due to the Sobolev inequality, 
\[\|\phi(t)\|^2_{L^\infty}\le C\|\phi(t)\|\|\phi_{r}(t)\|,\quad\|\psi(t)\|^2_{L^\infty}\le C\|\psi(t)\|\|\psi_r(t)\|.\]
Since $\|(\phi,\psi)(t)\|\le CN(t)<+\infty$, once we show that
\begin{equation}\label{time_goal}
	\lim_{t\to\infty}\|(\phi_r,\psi_r)(t)\|=0,
\end{equation}
we have
\[\lim_{t\to\infty}\|(\phi,\psi)(t)\|_{L^\infty}=0.\]
Furthermore, we again use Sobolev inequality to observe that
\[\|\phi_r(t)\|_{L^\infty}^2\le C\|\phi_r(t)\|\|\phi_{rr}(t)\|\le CN\|\phi_r(t)\|.\]
Therefore, \eqref{time_goal} also implies $\lim_{t\to\infty}\|\phi_r(t)\|_{L^\infty}=0$, which yields the desired convergence. Since $\|\psi_r\|\le \|G\|$, we will show that $\|\phi_r(t)\|,\|G(t)\|\to0$ as $t\to\infty$, which is guaranteed from $\|\phi_r(t)\|^2,\|G(t)\|^2\in W^{1,1}(0,\infty)$. Below, we prove this.\\

\noindent $\bullet$ ($\|\phi_r(t)\|^2,\|G(t)\|^2\in L^1(0,\infty)$) Since both terms $\|\phi_r\|^2$ and $\|G\|^2$ are included in the dissipation $\mathcal{D}$, it follows from the estimate \eqref{est:apriori} that 
\[\int_0^\infty (\|\phi_r(t)\|^2+\|G(t)\|^2)\,dt\le\int_0^\infty \mathcal{D}(t)\,dt<+\infty.\]

\noindent $\bullet$ ($\frac{d}{dt}\|\phi_r(t)\|^2,\frac{d}{dt}\|G(t)\|^2\in L^1(0,\infty)$): After taking time derivative and using $\phi_t = -\rho G-\rho_{r}\psi$,

\begin{align*}
	\frac{d}{dt}\|\phi_r\|^2 &= 2\<\phi_r,\phi_{tr}\>\le C\|\phi_r\|\|\phi_{tr}\|\\
	&\le C\|\phi_r\|^2 + C(\|\rho_rG\|^2+\|\rho G_r\|^2+\|\rho_{rr}\psi\|^2+\|\rho_r\psi_r\|^2)\\
	&\le C\mathcal{D},
\end{align*}
where we used
\[\|\rho_r\|_{L^\infty}\le \|\phi_r\|_{L^\infty}+\|\tilde{\rho}_r\|_{L^\infty}\le C(N+|\rho_b|)<C,\]
and Lemma \ref{lem:exp-Hardy} to obtain
\[\|\rho_{rr}\psi\|^2 \le C\|\phi_{rr}\psi\|^2+ C\|\tilde{\rho}_{rr}\psi\|^2\le C\|\phi_{rr}\|^2+C\|\psi_r\|^2.\]
This yields $\frac{d}{dt}\|\phi_r(t)\|^2\in L^1(0,\infty)$. For $G$, we use \eqref{eq:G} to obtain

\begin{align*}
	\frac{d}{dt}\|G\|^2 &= 2\<G,G_t\>=2\int_1^\infty G(\psi_t)_rr^{n-1}\,dr +2(n-1)\int_1^\infty G\psi_t r^{n-2}\,dr\\
	&=-2\int_1^\infty \psi_t \pa_r(G r^{n-1})\,dr+2(n-1)\int_1^\infty G\psi_t r^{n-2}\,dr\\
	&=-2\int_1^\infty \psi_t G_r r^{n-1}\,dr\le C\|\psi_t\|^2+C\|G_r\|^2<C\mathcal{D}.
\end{align*}

Hence, we also obtain $\frac{d}{dt}\|G\|^2\in L^1(0,\infty)$. This completes the proof of the time-asymptotic behavior and the proof of Theorem \ref{thm:main}.

\end{document}